\documentclass[12pt]{amsart}
\usepackage{amsmath, amsthm, amscd, amsfonts, amssymb, graphicx, color,float,pgf,tikz}
\usepackage{amssymb,fontenc}
\usepackage{latexsym,wasysym,mathrsfs}
\usepackage{hyperref}
\usetikzlibrary{arrows}
\usepackage[square,numbers,sort&compress]{natbib}

\textheight =22,5cm
\textwidth =14,5cm

\makeatletter \oddsidemargin.9375in \evensidemargin \oddsidemargin
\marginparwidth1.9375in \makeatother


\newtheorem{theorem}{Theorem}[section]
\newtheorem{lemma}[theorem]{Lemma}
\newtheorem{proposition}[theorem]{Proposition}
\newtheorem{corollary}[theorem]{Corollary}

\theoremstyle{definition}
\newtheorem{definition}[theorem]{Definition}
\newtheorem{example}[theorem]{Example}

\newtheorem{remark}[theorem]{Remark}
\numberwithin{equation}{section}



\newcommand{\be}{\begin{equation}}
\newcommand{\ee}{\end{equation}}
\newcommand{\bee}{\begin{example}}
\newcommand{\eee}{\end{example}}



\numberwithin{equation}{section}

\usepackage{etoolbox}
\makeatletter
\patchcmd{\@settitle}{\uppercasenonmath\@title}{}{}{}
\patchcmd{\@setauthors}{\MakeUppercase}{}{}{}
\makeatother

\allowdisplaybreaks
\begin{document}

\title[ Continuous Controlled K-G-Frames for Hilbert  $C^\ast$-modules ]{Continuous Controlled K-G-Frames for Hilbert  $C^\ast$-module}

\author[Abdeslam TOURI, Hatim LABRIGUI and , Samir KABBAJ]{Abdeslam TOURI$^1$$^{*}$, Hatim LABRIGUI$^1$ \MakeLowercase{and} Samir KABBAJ$^1$}

\address{$^{1}$Department of Mathematics, University of Ibn Tofail, B.P. 133, Kenitra, Morocco}
\email{\textcolor[rgb]{0.00,0.00,0.84}{touri.abdo68@gmail.com}}

\email{\textcolor[rgb]{0.00,0.00,0.84}{hlabrigui75@gmail.com}}

\email{\textcolor[rgb]{0.00,0.00,0.84}{samkabbaj@yahoo.fr}}

\subjclass[2010]{42C15,42C40}

\keywords{Continuous K-g-frame,Controlled continuous g-frames, Controlled K-g-frame, Continuous Controlled K-g-frame,   $C^{\ast}$-algebra, Hilbert $\mathcal{A}$-modules.\\
\indent $^{*}$ Corresponding author}
\maketitle

\begin{abstract}
	Frame Theory has a great revolution for recent years. This Theory has been extended from Hilbert spaces to Hilbert  $C^{\ast}$-modules. The purpose of this paper is the introduction and the study of the new concept that of Continuous Controlled K-g-Frame for Hilbert $C^{\ast}$-Modules wich is a generalizations of discrete Controlled K-g-Frames in Hilbert $C^{\ast}$-Modules. Also  we establish some results.

\end{abstract}
\maketitle
\vspace{0.1in}

\section{\textbf{Introduction and preliminaries}}
The concept of frames in Hilbert spaces has been introduced by
Duffin and Schaeffer \cite{Duf} in 1952 to study some deep problems in nonharmonic Fourier series. After the fundamental paper \cite{13} by Daubechies, Grossman and Meyer, frame theory began to be widely used, particularly in the more specialized context of wavelet frame and Gabor frame \cite{Gab}. Frames have been used in signal processing, image processing, data compression and sampling theory. 
The concept of a generalization of frame to a family indexed by some locally compact space endowed with a Radon measure was proposed by G. Kaiser \cite{15} and independently by Ali, Antoine and Gazeau \cite{11}. These frames are known as continuous frames. Gabardo and Han in \cite{14} called them frames associated with measurable spaces, Askari-Hemmat, Dehghan and Radjabalipour in \cite{12} called them generalized frames and in mathematical physics they are know as energy-staes. 

In 2012, L. Gavruta \cite{02} introduced the notion of K-frame in Hilbert space to study the atomic systems with respect to a bounded linear operator K.
Controlled frames in Hilbert spaces have been introduced by P. Balazs \cite{01} to improve the numerical efficiency of iterative algorithms for inverting the frame operator. Rahimi \cite{05} defined the concept of controlled K-frames in Hilbert spaces and showed that controlled K-frames are equivalent to K-frames for which the controlled operator C  can be used as precondition in applications.
Controlled frames in $C^{\ast}$-modules were introduced by Rashidi and Rahimi \cite{03}, where the authors showed that they share many useful properties with their corresponding notions in a Hilbert spaces. Finally, we note that Controlled K-g- frames in Hilbet spaces have been introduced by Dingli Hua and Yongdong Huang \cite{F11}. 
In this paper we introduce the notion of a continuous controlled K-g-frame in Hilbert $C^{\ast}$-modules.

In the following we briefly recall the definitions and basic properties of $C^{\ast}$-algebras and Hilbert $\mathcal{A}$-modules. Our references for $C^{\ast}$-algebras are \cite{{Dav},{Con}}. For a $C^{\ast}$-algebra $\mathcal{A}$ if $a\in\mathcal{A}$ is positive we write $a\geq 0$ and $\mathcal{A}^{+}$ denotes the set of all positive elements of $\mathcal{A}$.
\begin{definition}\cite{Pas}	
	Let $ \mathcal{A} $ be a unital $C^{\ast}$-algebra and $\mathcal{H}$ be a left $ \mathcal{A} $-module, such that the linear structures of $\mathcal{A}$ and $ \mathcal{H} $ are compatible. $\mathcal{H}$ is a pre-Hilbert $\mathcal{A}$-module if $\mathcal{H}$ is equipped with an $\mathcal{A}$-valued inner product $\langle.,.\rangle_{\mathcal{A}} :\mathcal{H}\times\mathcal{H}\rightarrow\mathcal{A}$, such that is sesquilinear, positive definite and respects the module action. In the other words,
	\begin{itemize}
		\item [(i)] $ \langle x,x\rangle_{\mathcal{A}}\geq0 $ for all $ x\in\mathcal{H} $ and $ \langle x,x\rangle_{\mathcal{A}}=0$ if and only if $x=0$.
		\item [(ii)] $\langle ax+y,z\rangle_{\mathcal{A}}=a\langle x,z\rangle_{\mathcal{A}}+\langle y,z\rangle_{\mathcal{A}}$ for all $a\in\mathcal{A}$ and $x,y,z\in\mathcal{H}$.
		\item[(iii)] $ \langle x,y\rangle_{\mathcal{A}}=\langle y,x\rangle_{\mathcal{A}}^{\ast} $ for all $x,y\in\mathcal{H}$.
	\end{itemize}	 
	For $x\in\mathcal{H}, $ we define $||x||=||\langle x,x\rangle_{\mathcal{A}}||^{\frac{1}{2}}$. If $\mathcal{H}$ is complete with $||.||$, it is called a Hilbert $\mathcal{A}$-module or a Hilbert $C^{\ast}$-module over $\mathcal{A}$. For every $a$ in $C^{\ast}$-algebra $\mathcal{A}$, we have $|a|=(a^{\ast}a)^{\frac{1}{2}}$ and the $\mathcal{A}$-valued norm on $\mathcal{H}$ is defined by $|x|=\langle x, x\rangle_{\mathcal{A}}^{\frac{1}{2}}$ for $x\in\mathcal{H}$.
	
	Let $\mathcal{H}$ and $\mathcal{K}$ be two Hilbert $\mathcal{A}$-modules. A map $T:\mathcal{H}\rightarrow\mathcal{K}$ is said to be adjointable if there exists a map $T^{\ast}:\mathcal{K}\rightarrow\mathcal{H}$ such that $\langle Tx,y\rangle_{\mathcal{A}}=\langle x,T^{\ast}y\rangle_{\mathcal{A}}$ for all $x\in\mathcal{H}$ and $y\in\mathcal{K}$.
	
We reserve the notation $End_{\mathcal{A}}^{\ast}(\mathcal{H},\mathcal{K})$ for the set of all adjointable operators from $\mathcal{H}$ to $\mathcal{K}$ and $End_{\mathcal{A}}^{\ast}(\mathcal{H},\mathcal{H})$ is abbreviated to $End_{\mathcal{A}}^{\ast}(\mathcal{H})$.
\end{definition}

The following lemmas will be used to prove our mains results

\begin{lemma} \label{1} \cite{Ara}.
	Let $\mathcal{H}$ and $\mathcal{K}$ two Hilbert $\mathcal{A}$-modules and $T\in End_{\mathcal{A}}^{\ast}(\mathcal{H},\mathcal{K})$. Then the following statements are equivalente,
	\begin{itemize}
		\item [(i)] $T$ is surjective.
		\item [(ii)] $T^{\ast}$ is bounded below with respect to norm, i.e, there is $m>0$ such that $\|T^{\ast}x\|\geq m\|x\|$, $x\in\mathcal{K}$.
		\item [(iii)] $T^{\ast}$ is bounded below with respect to the inner product, i.e, there is $m'>0$ such that, $$\langle T^{\ast}x,T^{\ast}x\rangle_\mathcal{A}\geq m'\langle x,x\rangle_\mathcal{A} , x\in\mathcal{K}$$.
	\end{itemize}
\end{lemma}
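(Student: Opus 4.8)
The plan is to establish the cycle of implications $(i)\Rightarrow(iii)\Rightarrow(ii)\Rightarrow(i)$, leaning on the standard structure theory of adjointable operators with closed range on Hilbert $\mathcal{A}$-modules (as developed in Lance's book and used in \cite{Ara}). The reason for routing through $(iii)$ first is that the inner-product bound is the strongest statement and falls out most naturally from an invertibility argument, after which the remaining implications are either elementary or structural.

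First I would prove $(i)\Rightarrow(iii)$. Since $T$ is surjective, $\ran(T)=\mathcal{K}$ is closed, so the closed-range theory guarantees that the positive operator $TT^{\ast}\in End_{\mathcal{A}}^{\ast}(\mathcal{K})$ also has closed range, with $\ran(TT^{\ast})=\ran(T)=\mathcal{K}$. Hence $TT^{\ast}$ is a surjective positive operator; because $TT^{\ast}$ is self-adjoint we have $\ker(TT^{\ast})=\ran(TT^{\ast})^{\perp}=\mathcal{K}^{\perp}=\{0\}$, so $TT^{\ast}$ is a bounded bijective module map and therefore invertible with bounded inverse. Being positive and invertible, it satisfies $TT^{\ast}\geq\|(TT^{\ast})^{-1}\|^{-1}\,\id$. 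Pairing this in the inner product and using $\langle TT^{\ast}x,x\rangle_{\mathcal{A}}=\langle T^{\ast}x,T^{\ast}x\rangle_{\mathcal{A}}$ yields $(iii)$ with $m'=\|(TT^{\ast})^{-1}\|^{-1}$.

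The implication $(iii)\Rightarrow(ii)$ is immediate from the monotonicity of the $C^{\ast}$-norm on positive elements: from $\langle T^{\ast}x,T^{\ast}x\rangle_{\mathcal{A}}\geq m'\langle x,x\rangle_{\mathcal{A}}$ one takes norms to get $\|T^{\ast}x\|^{2}\geq m'\|x\|^{2}$, so $(ii)$ holds with $m=\sqrt{m'}$. Finally, for $(ii)\Rightarrow(i)$, the norm lower bound forces $T^{\ast}$ to be injective (if $T^{\ast}x=0$ then $m\|x\|\leq\|T^{\ast}x\|=0$) and, by the usual Banach-space argument, to have closed range; the closed-range theory then makes $\ran(T)$ closed as well, giving the orthogonal decomposition $\mathcal{K}=\ran(T)\oplus\ker(T^{\ast})$. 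Since $\ker(T^{\ast})=\{0\}$, we conclude $\ran(T)=\mathcal{K}$, i.e. $T$ is surjective.

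The main obstacle is that, unlike in Hilbert spaces, orthogonal complements and closed-range phenomena in Hilbert $C^{\ast}$-modules are not automatic: the equivalence between $T$ having closed range and $TT^{\ast}$ having closed range, the identity $\ran(TT^{\ast})=\ran(T)$, and the orthogonal splitting $\mathcal{K}=\ran(T)\oplus\ker(T^{\ast})$ all require the closed-range machinery and are the technical core on which both $(i)\Rightarrow(iii)$ and $(ii)\Rightarrow(i)$ depend. Once these structural facts are invoked, the algebraic steps (invertibility of a surjective positive operator, norm monotonicity, and injectivity from the lower bound) are routine.
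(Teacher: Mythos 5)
The paper does not prove this lemma at all: it is stated as a quoted result with the citation \cite{Ara}, so there is no in-paper argument to compare yours against. Your cyclic proof $(i)\Rightarrow(iii)\Rightarrow(ii)\Rightarrow(i)$ is correct and is essentially the standard argument from the literature: the only nontrivial inputs are the closed-range theorem for adjointable operators on Hilbert $C^{\ast}$-modules (which you correctly invoke for $\ran(TT^{\ast})=\ran(T)$, for the splitting $\mathcal{K}=\ran(T)\oplus\ker(T^{\ast})$, and for closedness of $\ran(T)$ given closedness of $\ran(T^{\ast})$) together with the fact that an operator $S$ is positive in $End_{\mathcal{A}}^{\ast}(\mathcal{K})$ if and only if $\langle Sx,x\rangle_{\mathcal{A}}\geq 0$ for all $x$, both of which hold in this setting.
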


\begin{lemma} \label{2}\cite{Pas} 
	Let $\mathcal{H}$ and $\mathcal{K}$ two Hilbert $\mathcal{A}$-modules and $T\in End_{\mathcal{A}}^{\ast}(\mathcal{H},\mathcal{K})$. Then the following statements are equivalente,
	\begin{itemize}
		\item [(i)] The operator T is bounded and $\mathcal{A}$-linear.
		\item  [(ii)]	There exist $0\leq k$ such that $\langle Tx, Tx \rangle_\mathcal{A}\leq  k \langle x, x \rangle_\mathcal{A}  $  for all $x\in \mathcal{H} $.
	\end{itemize}	
\end{lemma}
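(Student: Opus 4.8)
The plan is to establish the two implications separately; the substance lies in $(i)\Rightarrow(ii)$, where one must convert an operator-norm bound into a module-valued inner-product bound. Throughout I use that, because $T\in End_{\mathcal{A}}^{\ast}(\mathcal{H},\mathcal{K})$, the adjoint $T^{\ast}$ exists and the composition $T^{\ast}T$ lies in the $C^{\ast}$-algebra $End_{\mathcal{A}}^{\ast}(\mathcal{H})$; as an element of the form $a^{\ast}a$ it is positive there.

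For $(i)\Rightarrow(ii)$ I would first recall the elementary $C^{\ast}$-fact that every positive element $a$ of a unital $C^{\ast}$-algebra obeys $a\leq\|a\|\,\id$, since $\sigma(a)\subseteq[0,\|a\|]$. Applying this to $a=T^{\ast}T$ and invoking the $C^{\ast}$-identity $\|T^{\ast}T\|=\|T\|^{2}$ gives the operator inequality $T^{\ast}T\leq\|T\|^{2}\,\id$, equivalently $S:=\|T\|^{2}\,\id-T^{\ast}T\geq 0$ in $End_{\mathcal{A}}^{\ast}(\mathcal{H})$. To transfer this to the module inner product I would take the positive square root $C=S^{1/2}\in End_{\mathcal{A}}^{\ast}(\mathcal{H})$, so that for every $x\in\mathcal{H}$,
\[
\|T\|^{2}\langle x,x\rangle_{\mathcal{A}}-\langle Tx,Tx\rangle_{\mathcal{A}}=\langle Sx,x\rangle_{\mathcal{A}}=\langle Cx,Cx\rangle_{\mathcal{A}}\geq 0,
\]
where I have used $\langle T^{\ast}Tx,x\rangle_{\mathcal{A}}=\langle Tx,Tx\rangle_{\mathcal{A}}$. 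This is exactly $(ii)$ with $k=\|T\|^{2}$.

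For $(ii)\Rightarrow(i)$ I would start from $\langle Tx,Tx\rangle_{\mathcal{A}}\leq k\langle x,x\rangle_{\mathcal{A}}$, an inequality between positive elements of $\mathcal{A}$, and apply monotonicity of the $C^{\ast}$-norm on the positive cone to obtain $\|\langle Tx,Tx\rangle_{\mathcal{A}}\|\leq k\|\langle x,x\rangle_{\mathcal{A}}\|$, i.e. $\|Tx\|^{2}\leq k\|x\|^{2}$; hence $\|Tx\|\leq\sqrt{k}\,\|x\|$ and $T$ is bounded with $\|T\|\leq\sqrt{k}$. The $\mathcal{A}$-linearity is automatic for an adjointable map: for $a\in\mathcal{A}$ and all $y\in\mathcal{K}$ one computes $\langle T(ax),y\rangle_{\mathcal{A}}=\langle ax,T^{\ast}y\rangle_{\mathcal{A}}=a\langle Tx,y\rangle_{\mathcal{A}}=\langle aTx,y\rangle_{\mathcal{A}}$, and nondegeneracy of $\langle\cdot,\cdot\rangle_{\mathcal{A}}$ forces $T(ax)=aTx$.

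The step I expect to be the real obstacle is the passage from the $C^{\ast}$-algebra inequality $S\geq 0$ to the pointwise module inequality $\langle Sx,x\rangle_{\mathcal{A}}\geq 0$: this is where the Hilbert $C^{\ast}$-module setting differs from the Hilbert-space case, and it is handled precisely by writing $S=C^{\ast}C$ through the positive square root so that $\langle Sx,x\rangle_{\mathcal{A}}=\langle Cx,Cx\rangle_{\mathcal{A}}$ is manifestly positive. Everything else—the norm estimate and the automatic $\mathcal{A}$-linearity of adjointable operators—is routine.
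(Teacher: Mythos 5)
Your proof is correct, but there is nothing in the paper to compare it against: the lemma is quoted from Paschke \cite{Pas} and the paper supplies no proof of its own. Judged on its merits, your argument is sound. For $(i)\Rightarrow(ii)$ you correctly use that $End_{\mathcal{A}}^{\ast}(\mathcal{H})$ is a unital $C^{\ast}$-algebra, so that $T^{\ast}T\leq \|T\|^{2}\,\mathrm{id}_{\mathcal{H}}$, and the transfer from algebra positivity to the pointwise module inequality via the positive square root, $\langle Sx,x\rangle_{\mathcal{A}}=\langle S^{1/2}x,S^{1/2}x\rangle_{\mathcal{A}}\geq 0$, is exactly the right device; for $(ii)\Rightarrow(i)$, norm monotonicity on the positive cone and the nondegeneracy argument for $\mathcal{A}$-linearity are both fine (additivity of $T$ follows by the same nondegeneracy computation, which you may as well say explicitly).

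One remark on scope, which is where your route genuinely differs from the classical one. Your $(i)\Rightarrow(ii)$ leans on the standing hypothesis $T\in End_{\mathcal{A}}^{\ast}(\mathcal{H},\mathcal{K})$, since you need $T^{\ast}$ to form $T^{\ast}T$. Paschke's original result covers arbitrary bounded $\mathcal{A}$-linear maps, not necessarily adjointable: his proof shows $\varphi(\langle Tx,Tx\rangle_{\mathcal{A}})\leq \|T\|^{2}\varphi(\langle x,x\rangle_{\mathcal{A}})$ for every state $\varphi$ of $\mathcal{A}$, by passing to the Hilbert space obtained from the semi-inner product $\varphi(\langle\cdot,\cdot\rangle_{\mathcal{A}})$, and then uses the fact that positivity in a $C^{\ast}$-algebra is detected by states. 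Your square-root argument is cleaner and purely algebraic, but it buys that simplicity at the cost of generality; under the hypotheses as stated in the paper the cost is nil. Indeed, as stated the equivalence is somewhat degenerate anyway, since an adjointable map is automatically bounded and $\mathcal{A}$-linear, so both $(i)$ and $(ii)$ hold for every $T\in End_{\mathcal{A}}^{\ast}(\mathcal{H},\mathcal{K})$; the substantive version of the lemma is precisely the one for general maps, which your method does not reach but Paschke's does.
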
 
	

For the following theorem, $R(T)$ denote the range of the operattor $T$.
\begin{theorem}\label{do5} \cite{Do} 
	Let E, F and G be Hilbert ${\mathcal{A}}$-modules over a $C^{\ast}$-algebra $\mathcal{A}$. Let $ T\in End_{\mathcal{A}}^{\ast}(E,F) $ and  $  T'\in End_{\mathcal{A}}^{\ast}(G,F) $ with 
    $\overline{({R(T^{\ast})})}$ is orthogonally complemented. Then the following statements are equivalente:
	\begin{itemize}
		\item [(1)] $T'(T')^{\ast} \leq \lambda TT^{\ast}$ for some $\lambda >0$.
		\item  [(2)]	There exist $\mu >0 $ such that $\|(T')^{\ast}x\|\leq \mu \|T^{\ast}x\|$ for all $x\in F$.
		\item  [(3)] There exists $ D\in End_{\mathcal{A}}^{\ast}(G,E)$ such that $T'=TD$,
		that is the equation $TX=T'$ has a solution.
		\item  [(4)]  $R(T')\subseteq R(T)$.
		
	\end{itemize}
	
\end{theorem}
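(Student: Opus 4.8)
The plan is to prove the four statements equivalent by establishing the loop $(1)\Rightarrow(2)\Rightarrow(3)\Rightarrow(1)$ among the first three and then tying in $(4)$ through $(3)\Rightarrow(4)$ and $(4)\Rightarrow(3)$; the whole statement is a Douglas-type factorization theorem, so the routine implications are cheap and essentially all the difficulty is concentrated in manufacturing the adjointable operator $D$.

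First I would clear away the easy directions. For $(1)\Rightarrow(2)$, I would use the $\mathcal{A}$-valued identities $\langle TT^{\ast}x,x\rangle_{\mathcal{A}}=\langle T^{\ast}x,T^{\ast}x\rangle_{\mathcal{A}}$ and $\langle T'(T')^{\ast}x,x\rangle_{\mathcal{A}}=\langle (T')^{\ast}x,(T')^{\ast}x\rangle_{\mathcal{A}}$, so that $(1)$ reads $\langle (T')^{\ast}x,(T')^{\ast}x\rangle_{\mathcal{A}}\leq\lambda\langle T^{\ast}x,T^{\ast}x\rangle_{\mathcal{A}}$ in $\mathcal{A}^{+}$; passing to $C^{\ast}$-norms then gives $\|(T')^{\ast}x\|\leq\sqrt{\lambda}\,\|T^{\ast}x\|$, which is $(2)$ with $\mu=\sqrt{\lambda}$. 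For $(3)\Rightarrow(1)$, writing $T'=TD$ yields $T'(T')^{\ast}=TDD^{\ast}T^{\ast}$, and since Lemma \ref{2} applied to $D^{\ast}$ gives $DD^{\ast}\leq\|D\|^{2}\,\id$, I obtain $T'(T')^{\ast}\leq\|D\|^{2}TT^{\ast}$, so $\lambda=\|D\|^{2}$ works. Finally $(3)\Rightarrow(4)$ is immediate, since $R(T')=R(TD)\subseteq R(T)$.

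The crux is the construction of $D$, which supplies both $(2)\Rightarrow(3)$ and $(4)\Rightarrow(3)$ and is exactly where the hypothesis that $\overline{R(T^{\ast})}$ is orthogonally complemented is used. Starting from $(2)$, I would define a map on $R(T^{\ast})$ by $T^{\ast}x\mapsto(T')^{\ast}x$; the estimate $(2)$ shows simultaneously that it is well defined (if $T^{\ast}x=0$ then $\|(T')^{\ast}x\|\leq\mu\|T^{\ast}x\|=0$) and bounded of norm $\leq\mu$, so it extends by continuity to a bounded module map on $\overline{R(T^{\ast})}$. Because $\overline{R(T^{\ast})}$ is orthogonally complemented and $\ker T=\big(\overline{R(T^{\ast})}\big)^{\perp}$, I get a decomposition $E=\overline{R(T^{\ast})}\oplus\ker T$ together with an adjointable orthogonal projection $P$ onto $\overline{R(T^{\ast})}$; composing the extended map with $P$ (equivalently, declaring it $0$ on $\ker T$) produces a globally defined bounded module map $S\colon E\to G$ with $ST^{\ast}=(T')^{\ast}$. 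For the entry point $(4)$ I would instead use the same decomposition to send each $g\in G$ to the unique preimage in $\overline{R(T^{\ast})}$ of $T'g\in R(T')\subseteq R(T)$, obtaining a module map whose boundedness follows from a closed-graph argument.

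The hard part will be upgrading the bounded module map to an adjointable operator: in a Hilbert $C^{\ast}$-module a bounded $\mathcal{A}$-linear map need not be adjointable, and this is precisely the obstruction that orthogonal complementation is designed to remove. I would show that $S$ is adjointable by exploiting that $P$ is adjointable and that, on $\overline{R(T^{\ast})}$, the relation $ST^{\ast}=(T')^{\ast}$ pins down the action of the candidate adjoint through $\langle ST^{\ast}x,g\rangle_{\mathcal{A}}=\langle (T')^{\ast}x,g\rangle_{\mathcal{A}}=\langle x,T'g\rangle_{\mathcal{A}}$; once $S\in End_{\mathcal{A}}^{\ast}(E,G)$ is established, setting $D:=S^{\ast}\in End_{\mathcal{A}}^{\ast}(G,E)$ and taking adjoints in $ST^{\ast}=(T')^{\ast}$ gives $TD=T'$, which is $(3)$ and closes the argument. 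I expect the verification of adjointability, rather than any of the norm estimates, to be the genuine obstacle, with the complementation hypothesis entering decisively both to globalize the map and to guarantee that the projection used in the argument exists and is itself adjointable.
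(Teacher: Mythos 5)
A preliminary remark: the paper offers no proof of Theorem \ref{do5} at all --- it is imported verbatim from \cite{Do} and used as a black box --- so your proposal has to stand on its own. Your easy implications do stand: $(1)\Rightarrow(2)$ by passing the $\mathcal{A}$-valued inequality $\langle (T')^{\ast}x,(T')^{\ast}x\rangle_{\mathcal{A}}\leq\lambda\langle T^{\ast}x,T^{\ast}x\rangle_{\mathcal{A}}$ to norms, $(3)\Rightarrow(1)$ via $DD^{\ast}\leq\|D\|^{2}I$, $(3)\Rightarrow(4)$ trivially; likewise the identification $\ker T=\bigl(\overline{R(T^{\ast})}\bigr)^{\perp}$ and the construction of the bounded module map $S$ with $ST^{\ast}=(T')^{\ast}$ are correct. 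The genuine gap is exactly the step you defer to the end: your argument that $S$ is adjointable is circular. The pairing identity $\langle ST^{\ast}x,g\rangle_{\mathcal{A}}=\langle x,T'g\rangle_{\mathcal{A}}$ does not produce $S^{\ast}$; it only says that \emph{if} $S^{\ast}g$ exists, it must be an element $e\in\overline{R(T^{\ast})}$ satisfying $\langle x,Te\rangle_{\mathcal{A}}=\langle x,T'g\rangle_{\mathcal{A}}$ for all $x\in F$, i.e. $Te=T'g$. Hence the existence of $S^{\ast}g$ for every $g\in G$ is \emph{equivalent} to the range inclusion $R(T')\subseteq R(T)$: under the stated hypotheses, ``$S$ is adjointable'' is literally statement (4). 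In the branch $(2)\Rightarrow(3)$, where (4) is not yet available, nothing in your proposal bridges this, and composing with the adjointable projection $P$ does not help, because adjointability of the restriction of $S$ to $\overline{R(T^{\ast})}$ runs into the same obstruction. The norm estimate (2) does not by itself yield preimages under $T$; manufacturing them is precisely where the substance of the Fang--Yu--Yao theorem lies, and it is absent here.

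Consequently, what your methods actually establish is $(4)\Rightarrow(3)\Rightarrow(1)\Rightarrow(2)$ together with $(3)\Rightarrow(4)$, but no implication leading out of (2), so the four-fold equivalence is not proven. Indeed, your $(4)\Rightarrow(3)$ sketch is completable with the tools you name: define $Dg$ as the unique element of $\overline{R(T^{\ast})}$ with $TDg=T'g$ (uniqueness since $\ker T\cap\overline{R(T^{\ast})}=\{0\}$, boundedness by closed graph); then for $y=T^{\ast}x+k$ in the dense submodule $R(T^{\ast})+\ker T$ one checks $\langle Dg,y\rangle_{\mathcal{A}}=\langle T'g,x\rangle_{\mathcal{A}}=\langle g,(T')^{\ast}x\rangle_{\mathcal{A}}$, so the densely defined map $T^{\ast}x+k\mapsto(T')^{\ast}x$ is a candidate adjoint (well defined because (4) forces $\ker T^{\ast}\subseteq\ker(T')^{\ast}$); it is automatically bounded since $\|(T')^{\ast}x\|=\sup_{\|g\|\leq 1}\|\langle g,(T')^{\ast}x\rangle_{\mathcal{A}}\|=\sup_{\|g\|\leq 1}\|\langle Dg,y\rangle_{\mathcal{A}}\|\leq\|D\|\,\|y\|$, and extending by continuity exhibits $D$ as adjointable with $TD=T'$. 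Note how this argument consumes (4) twice; it cannot be recycled for $(2)\Rightarrow(3)$. To repair the proof you need a genuine argument for $(2)\Rightarrow(4)$ or $(2)\Rightarrow(3)$ exploiting the orthogonal complementation of $\overline{R(T^{\ast})}$ --- this is the heart of the Douglas-type theorem in the module setting, and for it you should consult the proof in \cite{Do}.
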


\section{\textbf{Continuous Controlled K-G-Frames for Hilbert  $C^\ast$-modules}}
Let $X$ be a Banach space, $(\Omega,\mu)$ a measure space, and  $f:\Omega\to X$ a measurable function. Integral of the Banach-valued function $f$ has been defined by Bochner and others. Most properties of this integral are similar to those of the integral of real-valued functions. Since every $C^{\ast}$-algebra and Hilbert $C^{\ast}$-module is a Banach space thus we can use this integral and its properties.

Let $\mathcal{H}$ and $\mathcal{K}$ be two Hilbert $C^{\ast}$-modules, $\{\mathcal{K}_{w}: w\in\Omega\}$  is a family of subspaces of $\mathcal{K}$, and $End_{\mathcal{A}}^{\ast}(\mathcal{H},\mathcal{K}_{w})$ is the collection of all adjointable $\mathcal{A}$-linear maps from $\mathcal{H}$ into $\mathcal{K}_{w}$.
We define
\begin{equation*}
l^{2}(\Omega, \{\mathcal{K}_{w}\}_{\omega \in \Omega})=\left\{x=\{x_{w}\}_{w\in\Omega}: x_{w}\in \mathcal{K}_{w}, \left\|\int_{\Omega}|x_{w}|^{2}d\mu(w)\right\|<\infty\right\}.
\end{equation*}
For any $x=\{x_{w}: w\in\Omega\}$ and $y=\{y_{w}: w\in\Omega\}$, if the $\mathcal{A}$-valued inner product is defined by $\langle x,y\rangle=\int_{\Omega}\langle x_{w},y_{w}\rangle_\mathcal{A} d\mu(w)$, the norm is defined by $\|x\|=\|\langle x,x\rangle_\mathcal{A}\|^{\frac{1}{2}}$. The $l^{2}(\Omega, \{\mathcal{K}_{w}\}_{\omega \in \Omega})$ is a Hilbert $C^{\ast}$-module\cite{EC}.\\
Let $\mathcal{A}$ be a $C^{\ast}$-algebra, $l^2({\mathcal{A}})$ is defined by, 
$$l^2({\mathcal{A}})=\{\{a_\omega\}_{w\in\Omega} \subseteq \mathcal{A} :\|\int_{\Omega}a_{\omega} a_{\omega}^{\ast} d\mu({\omega})\| <\infty\}.$$
$l^2({\mathcal{A}})$ is a Hilbert $C^{\ast}$-module (Hilbert $\mathcal{A}-module$) with pointwise operations and the inner product defined by, 
$$\langle \{a_\omega\}_{w\in\Omega} , \{b_\omega\}_{w\in\Omega}\rangle =\int_{\Omega}a_{\omega} b_{\omega}^{\ast} d\mu({\omega}), 
\{a_\omega\}_{w\in\Omega}, \{b_\omega\}_{w\in\Omega} \in l^2({\mathcal{A}}),$$
and, $$\|\{a_\omega\}_{w\in\Omega}\|=(\int_{\Omega}a_{\omega} a_{\omega}^{\ast} d\mu({\omega}))^{\frac{1}{2}}.$$
Let $GL^{+}(\mathcal{H})$ be the set of all positive bounded linear invertible operators on $\mathcal{H}$ with bounded inverse.
\begin{definition}\label{do3}
	Let  $\Lambda= \{\Lambda_{w}\}_{w\in\Omega}$ be a family in $ End_{\mathcal{A}}^{\ast}(\mathcal{H},\mathcal{K}_{w})$ for all $\omega \in \Omega$, and $C,C^{'} \in GL^{+}(\mathcal{H})$. We say that the  family $\Lambda$ is a $(C,C^{'})$-controlled continuous g-frame for Hilbert $C^{\ast}$-module $\mathcal{H}$ with respect to  $\{\mathcal{K}_{w}: w\in\Omega\}$ if it is a continuous g-Bessel family and there is a pair of constants $0<A, B$ such that, for any $f\in \mathcal{H}$,
	\begin{equation}\label{DP}
	A\langle f,f\rangle_{\mathcal{A}} \leq\int_{\Omega}\langle\Lambda_{w}Cf,\Lambda_{w}C^{'}f\rangle_{\mathcal{A}} d\mu(w)\leq B\langle f,f\rangle_{\mathcal{A}} \quad .
	\end{equation}
	$A$ and $B$ are called the $(C,C^{'})$-controlled continuous g-frames bounds.\\
	If $C^{'}=I$ then we call $\Lambda= \{\Lambda_{w}\}_{w\in\Omega}$ a $C$-controlled continuous g-frames for $\mathcal{H}$ with respect to  $\{\mathcal{K}_{w}: w\in\Omega\}$.\\
\end{definition}

\begin{definition} \label{17} 
	Let $\mathcal{H}$ be a Hilbert $\mathcal{A}$-module over a unital $C^{\ast}$-algebra, and 
	$ C,C'\in {GL^{+}(\mathcal{H})}$. 
	A family of adjointable operators $\{\Lambda_\omega\}_{w\in \Omega }\subset End_{\mathcal{A}}^{\ast}(\mathcal{H},\mathcal{K}_{w})$
	is said to be a continuous $(C,C^{'})$-controlled  K-g-frame for Hilbert $C^{\ast}$-module $\mathcal{H}$ with respect to  $\{\mathcal{K}_{w}: w\in\Omega\}$ if there exists two positive elements $A$ and $B$  such that
	\begin{equation} \label{127}
	A\langle K^\ast f,K^\ast f\rangle_\mathcal{A} \leq \int_{\Omega}\langle \Lambda_\omega C f,\Lambda_\omega C'f \rangle_\mathcal{A} d\mu(w) \leq B\langle f,f\rangle_\mathcal{A} , f\in \mathcal{H}.
	\end{equation}

	The elements $A$ and $B$ are called continuous $(C,C^{'})$-controlled  K-g-frame bounds. 
	
	If $A=B$ we call this continuous $(C,C^{'})$-controlled  K-g-frame a continuous   $(C,C^{'})$-controlled tight K-g-frame , and if $A=B=1$ it is called a continuous $(C,C^{'})$-controlled Parseval  K-g-Frame. If only the right-hand inequality of \eqref{127} is satisfied, we call  a continuous $(C,C^{'})$-controlled Bessel K-g-frame with Bessel bound $B$.\\
\end{definition}
\begin{example}
	Let $\mathcal{H}=\left\{ M=\left( 
	\begin{array}{cccc}
	a & b & 0 & 0 \\ 
	0 & c & 0 & d%
	\end{array}%
	\right) \text{ / }a,b,c,d\in 
	\mathbb{C}
	\right\} $, \\
	and $\mathcal{A=}\left\{ \left( 
	\begin{array}{cc}
	a & b \\ 
	c & d%
	\end{array}%
	\right) \text{ / }a,b,c,d\in 
	\mathbb{C}
	\right\} $ \\
	It's clair that $\mathcal{H}$ respectively $\mathcal{A}$ is a Hilbert space respectively a $\mathbb{C}^{\ast}$-algebra. Also it's known that $\mathcal{H}$ is a Hilbert $\mathcal{A}$-module.\\
	Let $C$ and $C^{'}$ be two operators respectively defined as follow,
	
	\begin{align*}
		C: \mathcal{H}   &\longrightarrow    \mathcal{H} \\
		 M&\longrightarrow \alpha M
			\end{align*}
			and
\begin{align*}
C^{'}: \mathcal{H}   &\longrightarrow    \mathcal{H} \\
 M&\longrightarrow \beta M
\end{align*}
	where $\alpha$ and $\beta$ are two reels numbers strictly greater than zero.\\
	It's clair that  $	C,C^{'} \in Gl^{+}(\mathcal{H})$.\\
	Indeed, for each $M \in \mathcal{H} $ one has
	\begin{equation*}
C^{-1}(M)=\alpha^{-1}M \quad and \quad (C^{'})^{-1}(M)=\beta^{-1}M.
	\end{equation*}
Let $\Omega = [0,1]$ endewed with the lebesgue's measure. It's clear that a measure space.
Moreover, for $\omega \in \Omega$, we define the operator $\Lambda _{w} : \mathcal{H}\rightarrow \mathcal{H}$ by,
\[
\Lambda _{w}(M)=w\left( 
\begin{array}{cccc}
0 & b & 0 & 0 \\ 
0 & c & 0 & 0%
\end{array}%
\right),
\]\\
$\Lambda _{w}$ is linear, bounded and selfadjoint.\\
In addition, for $M\in \mathcal{H}$, we have,
\begin{align*}
\int_{\Omega}\langle \Lambda _{w}CM,\Lambda _{w}C^{'}M\rangle_{\mathcal{A}}d\mu(\omega)&=\int_{\Omega}\alpha\beta\left( 
\begin{array}{cccc}
0 & b & 0 & 0 \\ 
0 & c & 0 & 0%
\end{array}%
\right)\left( 
\begin{array}{cccc}
0 & b \\
w\bar{b} & w\bar{c}\\
 0 & 0\\
0 & c %
\end{array}%
\right)d\mu(\omega)\\
&=\int_{\Omega}\alpha\beta\left( 
\begin{array}{cccc}
|b|^{2} & b\bar{c}  \\ 
c\bar{b} & |c|^{2} %
\end{array}%
\right)w^{2}d\mu(\omega).
\end{align*}
It's clear that,
\begin{align*}
\left( 
\begin{array}{cccc}
|b|^{2} & b\bar{c}  \\ 
c\bar{b} & |c|^{2} %
\end{array}%
\right)\leq 
\left( 
\begin{array}{cccc}
|a|^{2} + |b|^{2} & b\bar{c}  \\ 
c\bar{b} & |c|^{2} + |d|^{2}%
\end{array}%
\right)=\|M\|_{\mathcal{A}}^{2}.
\end{align*}
Then we have
\begin{equation*}
\int_{\Omega}\langle \Lambda _{w}CM,\Lambda _{w}C^{'}M\rangle_{\mathcal{A}}d\mu(\omega)\leq \frac{\alpha\beta}{3}\|M\|_{\mathcal{A}}^{2}.
\end{equation*}
Which show that the family $(\Lambda_{\omega})_{\omega\in \Omega}$ is a continuous $(C,C^{'})$-controlled Bessel sequence for $\mathcal{H}$ with $\frac{\alpha\beta}{3}$ as bound.\\
But if $b=c=0$, it's impossible to found a positive scalar $A$ such that
\begin{equation*}
A\|M\|_{\mathcal{A}}^{2}\leq \int_{\Omega}\langle \Lambda _{w}CM,\Lambda _{w}C^{'}M\rangle_{\mathcal{A}}d\mu(\omega)=\left( 
\begin{array}{cccc}
0 & 0  \\ 
0 & 0%
\end{array}%
\right),
\end{equation*}
where 
\begin{equation*}
M=\left( 
\begin{array}{cccc}
	a & 0 & 0 & 0 \\ 
	0 & 0 & 0 & d%
\end{array}%
\right) \qquad and \quad a,b >0.
\end{equation*}
So, $(\Lambda_{\omega})_{\omega\in \Omega}$ is not a continuous $(C,C^{'})$-controlled frame for $\mathcal{H}$.\\
But, if we consider the operator 
\begin{align*}
K:\qquad \mathcal{H} \qquad&\longrightarrow \qquad \mathcal{H}\\
\left( 
\begin{array}{cccc}
a & b & 0 & 0 \\ 
0 & c & 0 & d%
\end{array}%
\right)&\longrightarrow \left( 
\begin{array}{cccc}
0 & b & 0 & 0 \\ 
0 & c & 0 & 0%
\end{array}%
\right).
\end{align*}
Wich's linear, bounded and selfadjoint, we found
\begin{equation*}
\langle K^{\ast}M,K^{\ast}M\rangle = \left( 
\begin{array}{cccc}
|b|^{2} & b\bar{c}  \\ 
c\bar{b} & |c|^{2} %
\end{array}%
\right).
\end{equation*}
Then $(\Lambda_{\omega})_{\omega\in \Omega}$ is a continuous $(C,C^{'})$-controlled K-g-frame for $\mathcal{H}$.
\end{example}
    \begin{remark}
     Every continuous $(C,C')-$controlled  g-frame for $ \mathcal{H}$ is a continuous $(C,C')-$controlled  K-g-frame for $ \mathcal{H}$.
     Indeed, if $\{\Lambda_\omega\}_{w\in \Omega }$ is a continuous $(C,C^{'})$-controlled  g-frame for Hilbert $C^{\ast}$-module $\mathcal{H}$ with respect to  $\{\mathcal{K}_{w}: w\in\Omega\}$, then there exist a constants $A,B>0$ such that , $$A\langle  f, f\rangle_\mathcal{A} \leq \int_{\Omega}\langle \Lambda_\omega C f,\Lambda_\omega C'f \rangle_\mathcal{A} d\mu(w) \leq B\langle f,f\rangle_\mathcal{A} , f\in \mathcal{H}.$$
     But, 	\begin{displaymath}
     \langle K^{\ast}f,K^{\ast}f\rangle_\mathcal{A}\leq\|K\|^{2}\langle f,f\rangle_\mathcal{A},  f\in\mathcal{H}. 
     \end{displaymath}
     So, $$A\|K\|^{-2}\langle K^{\ast}f,K^{\ast}f\rangle_\mathcal{A} \leq \int_{\Omega}\langle \Lambda_\omega C f,\Lambda_\omega C'f \rangle_\mathcal{A} d\mu(w) \leq B\langle f,f\rangle_\mathcal{A} , f\in \mathcal{H}.$$
     Hence,  $\{\Lambda_\omega\}_{w\in \Omega }$ is a continuous $(C,C^{'})$-controlled  K-g-frame for Hilbert $C^{\ast}$-module $\mathcal{H}$ with respect to  $\{\mathcal{K}_{w}: w\in\Omega\}$.

    \end{remark}

	Let $\{\Lambda_\omega\}_{i\in \Omega }$ be a continuous $(C,C')-$controlled Bessel K-g-frame for Hilbert $C^{\ast}$- module $\mathcal{H}$ over $\mathcal{A}$ with respect to $\{\mathcal{K}_{w}: w\in\Omega\}$ with bounds $A$ and $B$.\\
	
	We define the operaror $T_{(C,C^{'})}$ by:
	\begin{equation*}
	T_{(C,C^{'})}:l^{2}(\Omega,\{\mathcal{K}_{w}\}_{w\in\Omega}) \rightarrow \mathcal{H},
	\end{equation*}
	
	such that:
	\begin{equation*}
	T_{(C,C^{'})}(\{y_{w}\}_{w\in\Omega})=\int_{\Omega}(CC^{'})^{\frac {1}{2}}\Lambda^{\ast}_{\omega}y_{\omega}d\mu(w), \qquad   \{y_{w}\}_{w\in\Omega} \in l^{2}(\Omega,\{\mathcal{K}_{w}\}_{w\in\Omega}).
	\end{equation*}
	The bounded linear operator $T_{(C,C^{'})}$ is called the $(C,C^{'})$ synthesis operator of $\Lambda$.\\
	The operator:
	\begin{equation*}
	T^{\ast}_{(C,C^{'})}: \mathcal{H}\rightarrow l^{2}(\Omega,\{\mathcal{K}_{w}\}_{w\in\Omega}),
	\end{equation*}  
	is given by:
	\begin{equation}\label{2..3}
	T^{\ast}_{(C,C^{'})}(x)=\{\Lambda_{\omega}(C^{'}C)^{\frac{1}{2}}x\}_{\omega \in \Omega}, \qquad x\in \mathcal{H},
	\end{equation}
	is called the $(C,C^{'})$ analysis operator for $\Lambda$.\\

	Inded, we have for all $x\in \mathcal{H}$ and $\{y_{w}\}_{w\in\Omega} \in l^{2}(\Omega,\{\mathcal{K}_{w}\}_{w\in\Omega})$
	\begin{align*}
	\langle T_{(C,C^{'})}(\{y_{w}\}_{w\in\Omega}),x\rangle_{\mathcal{A}}&=\langle \int_{\Omega}(CC^{'})^{\frac {1}{2}}\Lambda^{\ast}_{\omega}y_{\omega}d\mu(w),x\rangle_{\mathcal{A}}\\
	&=\int_{\Omega}\langle (CC^{'})^{\frac {1}{2}}\Lambda^{\ast}_{\omega}y_{\omega},x\rangle_{\mathcal{A}}d\mu(w)\\
	&=\int_{\Omega}\langle y_{\omega},\Lambda_{\omega}(CC^{'})^{\frac {1}{2}}x\rangle_{\mathcal{A}}d\mu(w)\\
	&=\langle \{y_{w}\}_{w\in\Omega} , \{\Lambda_{\omega}(C^{'}C)^{\frac{1}{2}}x\}_{\omega \in \Omega}\rangle_{l^{2}(\Omega,\{\mathcal{K}_{w}\}_{w\in\Omega})}\\
	&=\langle \{y_{w}\}_{w\in\Omega} , T^{\ast}_{(C,C^{'})}(x)\rangle_{l^{2}(\Omega,\{\mathcal{K}_{w}\}_{w\in\Omega})}.
	\end{align*}
	Which shows that $T^{\ast}_{(C,C^{'})}$ is the adjoint of $T_{(C,C^{'})}$.
	If $C$ and $C^{'}$ commute between them, and commute with the operators $\Lambda^{\ast}_{\omega}\Lambda_{\omega}$ for each $\omega \in \Omega$. We define the $(C,C^{'})$-controlled continuous g-frames operator by:
	\begin{align*}
	S_{(C,C^{'})}:&\mathcal{H}\longrightarrow \mathcal{H}\\
	&x\longrightarrow S_{(C,C^{'})}x=T_{(C,C^{'})}T^{\ast}_{(C,C^{'})}x=\int_{\Omega}C^{'}\Lambda^{\ast}_{w}\Lambda_{w}Cx d\mu(w).
	\end{align*}
	As consequence on has the following proposition.
\begin{proposition} \label{do}
    The operator $S_{(C,C^{'})}$ is positive, sefladjoint, and bounded.
\end{proposition}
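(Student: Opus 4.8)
The plan is to read off all three properties from the factorization $S_{(C,C^{'})}=T_{(C,C^{'})}T^{\ast}_{(C,C^{'})}$ established just above, together with the continuous Bessel bound $B$ of $\Lambda$. Self-adjointness requires no hypothesis beyond the fact that $T^{\ast}_{(C,C^{'})}$ is genuinely the adjoint of $T_{(C,C^{'})}$ (already verified by the inner-product computation preceding the statement): I would simply write
\begin{equation*}
S_{(C,C^{'})}^{\ast}=\big(T_{(C,C^{'})}T^{\ast}_{(C,C^{'})}\big)^{\ast}=\big(T^{\ast}_{(C,C^{'})}\big)^{\ast}T_{(C,C^{'})}^{\ast}=T_{(C,C^{'})}T^{\ast}_{(C,C^{'})}=S_{(C,C^{'})}.
\end{equation*}

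For positivity I would test $S_{(C,C^{'})}$ against an arbitrary $f\in\mathcal{H}$ and use the defining property of the adjoint,
\begin{equation*}
\langle S_{(C,C^{'})}f,f\rangle_{\mathcal{A}}=\langle T_{(C,C^{'})}T^{\ast}_{(C,C^{'})}f,f\rangle_{\mathcal{A}}=\langle T^{\ast}_{(C,C^{'})}f,T^{\ast}_{(C,C^{'})}f\rangle_{\mathcal{A}}\geq 0,
\end{equation*}
the right-hand element lying in $\mathcal{A}^{+}$ because it is the inner product of an element of $l^{2}(\Omega,\{\mathcal{K}_{w}\}_{w\in\Omega})$ with itself. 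For boundedness I would unfold the same quadratic form, using that $C^{'}$ is self-adjoint and the definition of $\Lambda^{\ast}_{\omega}$, to rewrite it as $\int_{\Omega}\langle \Lambda_{\omega}Cf,\Lambda_{\omega}C^{'}f\rangle_{\mathcal{A}}\,d\mu(w)$, which is $\leq B\langle f,f\rangle_{\mathcal{A}}$ by the Bessel inequality. Since therefore $\langle T^{\ast}_{(C,C^{'})}f,T^{\ast}_{(C,C^{'})}f\rangle_{\mathcal{A}}\leq B\langle f,f\rangle_{\mathcal{A}}$, Lemma \ref{2} applied to $T^{\ast}_{(C,C^{'})}$ shows the latter is bounded, and hence so is the composition $S_{(C,C^{'})}=T_{(C,C^{'})}T^{\ast}_{(C,C^{'})}$; one even obtains the explicit estimate $\|S_{(C,C^{'})}\|\leq B$.

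The genuinely technical points, which I expect to be the main (if routine) obstacle, are the bookkeeping ones. First, one must justify pulling the $\mathcal{A}$-valued inner product inside the Bochner integral, which is legitimate because $\langle\cdot,x\rangle_{\mathcal{A}}$ is bounded and $\mathcal{A}$-linear. Second, to make the integrand arising from $\langle T^{\ast}_{(C,C^{'})}f,T^{\ast}_{(C,C^{'})}f\rangle_{\mathcal{A}}$ coincide with $\langle C^{'}\Lambda^{\ast}_{\omega}\Lambda_{\omega}Cf,f\rangle_{\mathcal{A}}$, one uses the standing hypotheses that $C$ and $C^{'}$ commute with each other and with every $\Lambda^{\ast}_{\omega}\Lambda_{\omega}$, together with the positivity of $C,C^{'}$, to identify $(C^{'}C)^{1/2}\Lambda^{\ast}_{\omega}\Lambda_{\omega}(C^{'}C)^{1/2}$ with $C^{'}\Lambda^{\ast}_{\omega}\Lambda_{\omega}C$. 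Once these identifications are in place the three assertions follow with no further work.
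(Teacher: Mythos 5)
Your proposal is correct and follows exactly the route the paper intends: the paper offers no written proof, stating the proposition merely ``as consequence'' of the factorization $S_{(C,C^{'})}=T_{(C,C^{'})}T^{\ast}_{(C,C^{'})}$, and your argument (self-adjointness from $(TT^{\ast})^{\ast}=TT^{\ast}$, positivity from $\langle S f,f\rangle_{\mathcal{A}}=\langle T^{\ast}f,T^{\ast}f\rangle_{\mathcal{A}}\geq 0$, boundedness with $\|S_{(C,C^{'})}\|\leq B$ from the Bessel bound via Lemma \ref{2}) is precisely the content of that remark, with the commutativity bookkeeping $(C^{'}C)^{1/2}\Lambda^{\ast}_{\omega}\Lambda_{\omega}(C^{'}C)^{1/2}=C^{'}\Lambda^{\ast}_{\omega}\Lambda_{\omega}C$ correctly identified. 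No gap; if anything, your write-up supplies details the paper leaves implicit.
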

\begin{proposition}
	Let $K\in End_{\mathcal{A}}^{\ast}(\mathcal{H})$ and $ C,C'\in {GL^{+}(\mathcal{H})}$. Suppose that C and C' commutes with each other  and commute with the operators $\Lambda^{\ast}_{\omega}\Lambda_{\omega}$ for each $\omega \in \Omega$.. A family $\{\Lambda_\omega\}_{w\in \Omega }$ is a continuous $(C,C^{'})$-controlled  Bessel K-g-frames for $\mathcal{H}$ with respect to $\{\mathcal{K}_{w}: w\in\Omega\}$ with bounds  $B$ if and only if the operator $T_{(C,C^{'})}$ is well defined and bounded with $\|T_{(C,C^{'})}\|\leq \sqrt{B}$.
\end{proposition}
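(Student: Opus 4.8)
The plan is to reduce the controlled Bessel inequality to a single statement about the analysis operator $T^{\ast}_{(C,C')}$ of \eqref{2..3}, and then to read off both implications from Lemma \ref{2} together with the identity of operator norms $\|T_{(C,C')}\|=\|T^{\ast}_{(C,C')}\|$. First I would record the algebraic identity that the commutativity hypotheses are designed to produce. Fix $f\in\mathcal{H}$ and $\omega\in\Omega$. Since $C$ and $C'$ are positive (hence selfadjoint) and commute with one another and with $\Lambda^{\ast}_{\omega}\Lambda_{\omega}$, one may pass to the symmetric factor $(C'C)^{1/2}$:
\begin{align*}
\langle \Lambda_{\omega}Cf,\Lambda_{\omega}C'f\rangle_{\mathcal{A}}
&=\langle C'\Lambda^{\ast}_{\omega}\Lambda_{\omega}Cf,f\rangle_{\mathcal{A}}\\
&=\langle (C'C)^{\frac12}\Lambda^{\ast}_{\omega}\Lambda_{\omega}(C'C)^{\frac12}f,f\rangle_{\mathcal{A}}\\
&=\langle \Lambda_{\omega}(C'C)^{\frac12}f,\Lambda_{\omega}(C'C)^{\frac12}f\rangle_{\mathcal{A}}.
\end{align*}
Integrating over $\Omega$ and recalling from \eqref{2..3} that $T^{\ast}_{(C,C')}f=\{\Lambda_{\omega}(C'C)^{1/2}f\}_{\omega\in\Omega}$, I obtain the central identity
\[
\int_{\Omega}\langle \Lambda_{\omega}Cf,\Lambda_{\omega}C'f\rangle_{\mathcal{A}}\,d\mu(\omega)=\langle T^{\ast}_{(C,C')}f,T^{\ast}_{(C,C')}f\rangle_{\mathcal{A}},
\]
so that the continuous $(C,C')$-controlled Bessel condition with bound $B$ is literally the inequality $\langle T^{\ast}_{(C,C')}f,T^{\ast}_{(C,C')}f\rangle_{\mathcal{A}}\le B\langle f,f\rangle_{\mathcal{A}}$ for every $f\in\mathcal{H}$.

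With this identity the two directions are short. Assuming the Bessel bound, the inequality above together with $\|T^{\ast}_{(C,C')}f\|^{2}=\|\langle T^{\ast}_{(C,C')}f,T^{\ast}_{(C,C')}f\rangle_{\mathcal{A}}\|\le B\|f\|^{2}$ shows that the analysis operator maps $\mathcal{H}$ into $l^{2}(\Omega,\{\mathcal{K}_{w}\}_{w\in\Omega})$ and is bounded; applying Lemma \ref{2} in the direction (ii)$\Rightarrow$(i) (with $k=B$) confirms it is a bounded $\mathcal{A}$-linear operator, and its adjoint $T_{(C,C')}=(T^{\ast}_{(C,C')})^{\ast}$ is then well defined and bounded with $\|T_{(C,C')}\|=\|T^{\ast}_{(C,C')}\|\le\sqrt{B}$, the adjoint coinciding with the synthesis formula by the duality computation preceding Proposition \ref{do}. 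Conversely, if $T_{(C,C')}$ is well defined and bounded with $\|T_{(C,C')}\|\le\sqrt{B}$, then $\|T^{\ast}_{(C,C')}\|=\|T_{(C,C')}\|\le\sqrt{B}$, and Lemma \ref{2} in the direction (i)$\Rightarrow$(ii) gives $\langle T^{\ast}_{(C,C')}f,T^{\ast}_{(C,C')}f\rangle_{\mathcal{A}}\le\|T^{\ast}_{(C,C')}\|^{2}\langle f,f\rangle_{\mathcal{A}}\le B\langle f,f\rangle_{\mathcal{A}}$; the central identity then returns the Bessel inequality.

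The step I expect to be the main obstacle is not the algebra but the well-definedness and adjointability in the Hilbert $C^{\ast}$-module setting. Concretely, one must justify that the weak (Bochner) integral $\int_{\Omega}(CC')^{1/2}\Lambda^{\ast}_{\omega}y_{\omega}\,d\mu(\omega)$ converges for every $\{y_{\omega}\}\in l^{2}(\Omega,\{\mathcal{K}_{w}\}_{w\in\Omega})$ and that the bounded $\mathcal{A}$-linear map $T^{\ast}_{(C,C')}$ is genuinely adjointable, so that the equality $\|T_{(C,C')}\|=\|T^{\ast}_{(C,C')}\|$ is legitimate; this is precisely the point at which $C^{\ast}$-modules behave less comfortably than Hilbert spaces, since boundedness alone need not supply an adjoint. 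I would resolve it by exhibiting $T_{(C,C')}$ directly through the pairing $\langle T_{(C,C')}\{y_{\omega}\},f\rangle_{\mathcal{A}}=\langle \{y_{\omega}\},T^{\ast}_{(C,C')}f\rangle$, whose right-hand side is controlled by the Bessel bound and hence defines the required element of $\mathcal{H}$.
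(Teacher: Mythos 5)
Your reduction is genuinely different from the paper's argument, and in one direction it is sharper. You run both implications through the single identity
\[
\int_{\Omega}\langle \Lambda_{\omega}Cf,\Lambda_{\omega}C'f\rangle_{\mathcal{A}}\,d\mu(\omega)
=\bigl\langle T^{\ast}_{(C,C')}f,\,T^{\ast}_{(C,C')}f\bigr\rangle,
\]
and then invoke $\|T_{(C,C')}\|=\|T^{\ast}_{(C,C')}\|$ together with Lemma~\ref{2}. The paper does not organize the proof this way: its forward implication estimates $\|T_{(C,C')}(\{y_{\omega}\})\|$ directly, writing it as $\sup_{\|x\|=1}\|\langle T_{(C,C')}(\{y_{\omega}\}),x\rangle_{\mathcal{A}}\|$ and applying Cauchy--Schwarz, while its converse substitutes $y_{\omega}=\Lambda_{\omega}(CC')^{1/2}x$ into $\|\langle T_{(C,C')}(\{y_{\omega}\}),x\rangle_{\mathcal{A}}\|\le\|T_{(C,C')}\|\,\|\{y_{\omega}\}\|\,\|x\|$ and cancels a common factor. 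Your converse actually buys something: Lemma~\ref{2} (used in its sharp form, with constant $\|T^{\ast}_{(C,C')}\|^{2}$, which is what Paschke's result provides even though the paper's statement only asserts existence of some $k$) delivers the $\mathcal{A}$-valued inequality $\langle T^{\ast}_{(C,C')}f,T^{\ast}_{(C,C')}f\rangle_{\mathcal{A}}\le B\langle f,f\rangle_{\mathcal{A}}$ required by Definition~\ref{17}, whereas the paper's cancellation argument only concludes the scalar bound $\int_{\Omega}\langle\Lambda_{\omega}Cx,\Lambda_{\omega}C'x\rangle_{\mathcal{A}}\,d\mu(\omega)\le B\|x\|^{2}$, i.e.\ the norm version \eqref{2.12}, which is strictly weaker.

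However, your forward direction has a genuine gap at exactly the point you flagged, and your proposed repair does not close it. You obtain $T_{(C,C')}$ as the adjoint of the bounded analysis operator, and you propose to manufacture the element $T_{(C,C')}\{y_{\omega}\}$ from the pairing $f\mapsto\langle\{y_{\omega}\},T^{\ast}_{(C,C')}f\rangle$. In a Hilbert $C^{\ast}$-module this is a Riesz-representation step, and it fails in general: a bounded $\mathcal{A}$-linear functional need not have the form $\langle\,\cdot\,,g\rangle_{\mathcal{A}}$ for any $g\in\mathcal{H}$, because Hilbert $C^{\ast}$-modules are not self-dual; this failure is precisely why the class $End_{\mathcal{A}}^{\ast}$ of adjointable maps must be singled out and why boundedness of $T^{\ast}_{(C,C')}$ alone does not produce an adjoint. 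What the proposition asserts is that the concrete Bochner integral $\int_{\Omega}(CC')^{1/2}\Lambda_{\omega}^{\ast}y_{\omega}\,d\mu(\omega)$ converges and defines a bounded operator; your detour through the adjoint replaces that integral by an object whose existence is the very thing in question. The paper's direct estimate avoids this particular trap by bounding the integral itself (though, to be fair, it silently grants the integral's convergence rather than proving it). A correct repair along your lines would be to establish norm convergence of the integral first --- for instance on simply supported families, using your central identity and the Bessel bound to get a Cauchy estimate --- and only then use the duality computation preceding Proposition~\ref{do} to identify the resulting operator as the adjoint of $T^{\ast}_{(C,C')}$, after which $\|T_{(C,C')}\|=\|T^{\ast}_{(C,C')}\|\le\sqrt{B}$ goes through as you intended.
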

\begin{proof}
   	$(1)\Longrightarrow (2)$\\
   	Let  $\{\Lambda_{w}, w\in\Omega\}$ be a $(C,C^{'})$-controlled continuous K-g-Bessel family for $\mathcal{H}$ with respect $\{\mathcal{K}_{\omega}\}_{\omega \in \Omega}$ with bound $B$.\\
   	Then we have, 
 \begin{equation} \label{2.12}
    \|\int_{\Omega}\langle\Lambda_{w}Cx,\Lambda_{w}C^{'}x\rangle_{\mathcal{A}} d\mu(w)\| \leq B\|x\|^{2},  \qquad  x\in \mathcal{H}.
 \end{equation}
     Indeed for all $\{y_{w}\}_{w\in\Omega} \in l^{2}(\Omega,\{\mathcal{K}_{w}\}_{w\in\Omega})$ :
\begin{align*}
   	\|T_{CC^{'}}(\{y_{w}\}_{w\in\Omega})\|^{2}&=\underset{x \in \mathcal{H}, \|x\|=1}{\sup}\|\langle T_{CC^{'}}(\{y_{w}\}_{w\in\Omega}),x\rangle_{\mathcal{A}} \|^{2}.
\end{align*}
     Hence, 
 \begin{align*}
   	\|T_{CC^{'}}(\{y_{w}\}_{w\in\Omega})\|^{2}&=\underset{x \in \mathcal{H}, \|x\|=1}{\sup}\|\langle \int_{\Omega}(CC^{'})^{\frac {1}{2}}\Lambda^{\ast}_{\omega}y_{\omega}d\mu(w),x\rangle_{\mathcal{A}}\|^{2}\\
   	&=\underset{x \in \mathcal{H}, \|x\|=1}{\sup}\| \int_{\Omega}\langle(CC^{'})^{\frac {1}{2}}\Lambda^{\ast}_{\omega}y_{\omega},x\rangle_{\mathcal{A}} d\mu(w)\|^{2}\\ 
   	&=\underset{x \in U, \|x\|=1}{\sup}\| \int_{\Omega}\langle y_{\omega},\Lambda_{\omega}(CC^{'})^{\frac {1}{2}}x\rangle_{\mathcal{A}} d\mu(w)\|^{2}\\ 
   	&\leq \underset{x \in U, \|x\|=1}{\sup}\| \int_{\Omega}\langle y_{\omega},y_{\omega}\rangle_{\mathcal{A}} d\mu(w)\| \|\int_{\Omega}\langle \Lambda_{\omega}(CC^{'})^{\frac {1}{2}}x,\Lambda_{\omega}(CC^{'})^{\frac {1}{2}}x\rangle_{\mathcal{A}} d\mu(w)\|\\ 
   	&= \underset{x \in U, \|x\|=1}{\sup}\| \int_{\Omega}\langle y_{\omega},y_{\omega}\rangle_{\mathcal{A}} d\mu(w)\| \|\int_{\Omega}\langle \Lambda_{\omega}Cx,\Lambda_{\omega}C^{'}x\rangle_{\mathcal{A}} d\mu(w)\|\\ 
   	&\leq \underset{x \in \mathcal{H}, \|x\|=1}{\sup}\| \int_{\Omega}\langle y_{\omega},y_{\omega}\rangle_{\mathcal{A}} d\mu(w)\|B\|x\|^{2} =B\|\{y_{\omega}\}_{\omega \in \Omega}\|^{2}.
 \end{align*}
    Then we have 
 \begin{align*}
    \|T_{CC^{'}}(\{y_{w}\}_{w\in\Omega})\|^{2}\leq B\|\{y_{\omega}\}_{\omega \in \Omega}\|^{2} \Longrightarrow \|T_{CC^{'}}\|\leq \sqrt{B}. 
 \end{align*}
   	We conclude that the operator $T_{CC^{'}}$ is well defined and bounded.\\
    	$(2)\Longrightarrow (1)$\\
    	If (2) holds, then for any $x\in \mathcal{H}$, we have:
    	
    	\begin{align*}
    	\int_{\Omega}\langle\Lambda_{w}Cx,\Lambda_{w}C^{'}x\rangle_{\mathcal{A}} d\mu(w)&=\int_{\Omega}\langle C^{'}\Lambda^{\ast}_{w}\Lambda_{w}Cx,x\rangle_{\mathcal{A}} d\mu(w)\\
    	&=\int_{\Omega}\langle (CC^{'})^{\frac {1}{2}}\Lambda^{\ast}_{w}\Lambda_{w}(CC^{'})^{\frac {1}{2}}x,x\rangle_{\mathcal{A}} d\mu(w)\\
    	\end{align*}
    	Put: $y_{w}=\Lambda_{w}(CC^{'})^{\frac {1}{2}}x$ , $\omega \in \Omega$, then:\\
    	\begin{align*}
    	\int_{\Omega}\langle\Lambda_{w}Cx,\Lambda_{w}C^{'}x\rangle_{\mathcal{A}} d\mu(w)&=\langle T_{CC^{'}}(\{y_{w}\}_{w\in\Omega}),x\rangle_{\mathcal{A}}\\
    	&\leq \| T_{CC^{'}}\|\|(\{y_{w}\}_{w\in\Omega})\|\|x\|\\
    	&\leq \| T_{CC^{'}}\|(\int_{\Omega}\|\Lambda_{w}(CC^{'})^{\frac {1}{2}}x\|^{2}d\mu(w))^{\frac {1}{2}}\|x\|\\
    	&\leq\| T_{CC^{'}}\|\|x\|(\int_{\Omega}\langle\Lambda_{w}Cx,\Lambda_{w}C^{'}x\rangle_{\mathcal{A}} d\mu(w))^{\frac {1}{2}}.
    	\end{align*}
    	So, 
    	\begin{equation*}
    	(\int_{\Omega}\langle\Lambda_{w}Cx,\Lambda_{w}C^{'}x\rangle_{\mathcal{A}} d\mu(w))^{\frac{1}{2}}\leq \| T_{CC^{'}}\|\|x\|.
    	\end{equation*}
    	Therefore:
    	\begin{align*}
    	\int_{\Omega}\langle\Lambda_{w}Cx,\Lambda_{w}C^{'}x\rangle_{\mathcal{A}} d\mu(w) &\leq \| T_{CC^{'}}\|^{2}\|x\|^{2}.
    	\end{align*}
    	As  $\| T_{CC^{'}}\|\leq \sqrt{B}$, we have :
    	\begin{align*}
    	\int_{\Omega}\langle\Lambda_{w}Cx,\Lambda_{w}C^{'}x\rangle_{\mathcal{A}} d\mu(w) &\leq B\|x\|^{2}, 
    	\end{align*}
    	which end the proof.
    \end{proof}
    
\begin{lemma}
	 Let $\{\Lambda_\omega\}_{w\in \Omega }\subset End_{\mathcal{A}}^{\ast}(\mathcal{H},\mathcal{K}_{w})$ be a continuous $(C,C')-$controlled Bessel K-g-frame for Hilbert $C^{\ast}$- module $\mathcal{H}$  with respect to $\{\mathcal{K}_{w}: w\in\Omega\}$. Then for any $K \in End_{\mathcal{A}}^{\ast}(\mathcal{H})$, the family $\{\Lambda_\omega K \}_{w\in \Omega }$ is a continuous $(C,C')-$controlled Bessel K-g-frame for Hilbert $C^{\ast}$- module $\mathcal{H}$.\\
\end{lemma}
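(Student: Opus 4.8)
The plan is to obtain the Bessel (upper) bound for the new family $\{\Lambda_\omega K\}_{w\in\Omega}$ directly from that of $\{\Lambda_\omega\}_{w\in\Omega}$ by a change of the tested vector. First I would record that each $\Lambda_\omega K$ is again an admissible operator of the required type: since $\Lambda_\omega\in End_{\mathcal{A}}^{\ast}(\mathcal{H},\mathcal{K}_w)$ and $K\in End_{\mathcal{A}}^{\ast}(\mathcal{H})$, the composition $\Lambda_\omega K$ lies in $End_{\mathcal{A}}^{\ast}(\mathcal{H},\mathcal{K}_w)$ with adjoint $(\Lambda_\omega K)^{\ast}=K^{\ast}\Lambda_\omega^{\ast}$. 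Hence $\{\Lambda_\omega K\}_{w\in\Omega}$ is a legitimate family and the quantity $\int_{\Omega}\langle \Lambda_\omega K C f,\Lambda_\omega K C' f\rangle_{\mathcal{A}}\,d\mu(w)$ to be estimated is well posed.

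Next I would fix $f\in\mathcal{H}$ and, working inside the standing commutativity framework of this section (so that $K$ commutes with $C$ and $C'$), rewrite $\Lambda_\omega K C f=\Lambda_\omega C(Kf)$ and $\Lambda_\omega K C' f=\Lambda_\omega C'(Kf)$. This turns the integral into $\int_{\Omega}\langle \Lambda_\omega C(Kf),\Lambda_\omega C'(Kf)\rangle_{\mathcal{A}}\,d\mu(w)$, which is exactly the integral appearing in the $(C,C')$-controlled Bessel estimate for $\{\Lambda_\omega\}$, but evaluated at $Kf$ instead of $f$. Applying that estimate with the original Bessel bound $B$ gives $\int_{\Omega}\langle \Lambda_\omega C(Kf),\Lambda_\omega C'(Kf)\rangle_{\mathcal{A}}\,d\mu(w)\le B\,\langle Kf,Kf\rangle_{\mathcal{A}}$.

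To finish I would absorb $\langle Kf,Kf\rangle_{\mathcal{A}}$ back into $\langle f,f\rangle_{\mathcal{A}}$. Since $K$ is bounded and $\mathcal{A}$-linear, Lemma \ref{2} furnishes a constant $k\ge 0$ with $\langle Kf,Kf\rangle_{\mathcal{A}}\le k\,\langle f,f\rangle_{\mathcal{A}}$ for all $f$, where concretely one may take $k=\|K\|^{2}$. Chaining the two inequalities yields $\int_{\Omega}\langle \Lambda_\omega K C f,\Lambda_\omega K C' f\rangle_{\mathcal{A}}\,d\mu(w)\le Bk\,\langle f,f\rangle_{\mathcal{A}}$, which is precisely the Bessel inequality for $\{\Lambda_\omega K\}_{w\in\Omega}$ with controlled Bessel bound $B\|K\|^{2}$, completing the argument.

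The one delicate point, which I expect to be the main obstacle, is the step where $K$ is commuted past $C$ and $C'$: the controlled Bessel hypothesis bounds only the mixed expression $\langle \Lambda_\omega C g,\Lambda_\omega C' g\rangle_{\mathcal{A}}$, so the reduction to the vector $g=Kf$ is clean only when $KC=CK$ and $KC'=C'K$, in line with the commutativity assumptions already imposed throughout this section. If one prefers not to invoke this, the alternative route is through the synthesis operator: under the relevant commutation the synthesis operator of $\{\Lambda_\omega K\}$ factors as $K^{\ast}T_{(C,C')}$, whence its norm is at most $\|K\|\,\|T_{(C,C')}\|\le \|K\|\sqrt{B}$, and boundedness of this operator is equivalent to the Bessel property by the preceding proposition. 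Either way the essential content is the composition estimate, and the commutativity bookkeeping is the step to get right.
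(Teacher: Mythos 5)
Your proof is correct and follows essentially the same route as the paper's: both apply the Bessel estimate for $\{\Lambda_\omega\}_{w\in\Omega}$ at the vector $Kf$ and then absorb $\langle Kf,Kf\rangle_{\mathcal{A}}\leq\|K\|^{2}\langle f,f\rangle_{\mathcal{A}}$ to obtain the Bessel bound $B\|K\|^{2}$ for $\{\Lambda_\omega K\}_{w\in\Omega}$. You are in fact more careful than the paper, which silently passes from $\Lambda_\omega C K f$ to $\Lambda_\omega K C f$ in its displayed inequalities; the commutativity of $K$ with $C$ and $C'$ that you flag as the delicate point is exactly the unstated assumption the paper's own argument relies on.
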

\begin{proof}
	Assume that  $\{\Lambda_\omega\}_{w\in \Omega }$ is a continuous $(C,C')-$controlled Bessel K-g-frame for Hilbert $C^{\ast}$- module $\mathcal{H}$  with respect to $\{\mathcal{K}_{w}: w\in\Omega\}$ with bound B.
	Then, $$\int_{\Omega}\langle \Lambda_\omega C f,\Lambda_\omega C'f \rangle_\mathcal{A} d\mu(w) \leq B\langle f,f\rangle_\mathcal{A} , f\in \mathcal{H}.$$
	So, $$\int_{\Omega}\langle \Lambda_\omega C Kf,\Lambda_\omega C'Kf \rangle_\mathcal{A} d\mu(w) \leq B\langle Kf,Kf\rangle_\mathcal{A} , f\in \mathcal{H}.$$
	Hence,$$\int_{\Omega}\langle \Lambda_\omega K C f,\Lambda_\omega K C'f \rangle_\mathcal{A} d\mu(w) \leq B\langle Kf,Kf\rangle_\mathcal{A}\leq \|K\|^2 B\langle f,f\rangle_\mathcal{A} , f\in \mathcal{H}.$$
	The results holds.
\end{proof}
\begin{lemma} 
	Let $K \in End_{\mathcal{A}}^{\ast}(\mathcal{H})$ and $ C,C'\in {GL^{+}(\mathcal{H})}$. Let $\{\Lambda_\omega\}_{w\in \Omega }$ is a continuous $(C,C')-$controlled Bessel K-g-frame for Hilbert $C^{\ast}$- module $\mathcal{H}$  with respect to $\{\mathcal{K}_{w}: w\in\Omega\}$. $\{\Lambda_\omega\}_{w\in \Omega }$ is a continuous $(C,C')-$controlled K- g-frame if and only if there exists $A>0$ such that $$AKK^\ast \leq S_{(C,C')}.$$
	
\end{lemma}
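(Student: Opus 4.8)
The plan is to observe that this lemma is, at bottom, a translation of the defining lower frame inequality into the language of the order relation on positive operators, so almost everything reduces to unwinding the definition of $S_{(C,C')}$ together with the meaning of $\leq$ on self-adjoint adjointable operators. Recall that for self-adjoint $T_1,T_2\in End_{\mathcal{A}}^{\ast}(\mathcal{H})$ the statement $T_1\leq T_2$ means precisely $\langle T_1 f,f\rangle_{\mathcal{A}}\leq\langle T_2 f,f\rangle_{\mathcal{A}}$ for every $f\in\mathcal{H}$. Since $KK^{\ast}$ is positive and, by Proposition \ref{do}, $S_{(C,C')}$ is positive and self-adjoint, the inequality $AKK^{\ast}\leq S_{(C,C')}$ is equivalent to the scalar-valued inequality $A\langle KK^{\ast}f,f\rangle_{\mathcal{A}}\leq\langle S_{(C,C')}f,f\rangle_{\mathcal{A}}$ holding for all $f$.

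The two identities I would record first are the following. On the one hand, using that $C,C'$ are positive and commute with one another and with the $\Lambda^{\ast}_{\omega}\Lambda_{\omega}$, the definition of the controlled frame operator gives $\langle S_{(C,C')}f,f\rangle_{\mathcal{A}}=\int_{\Omega}\langle C'\Lambda_{\omega}^{\ast}\Lambda_{\omega}Cf,f\rangle_{\mathcal{A}}\,d\mu(\omega)=\int_{\Omega}\langle\Lambda_{\omega}Cf,\Lambda_{\omega}C'f\rangle_{\mathcal{A}}\,d\mu(\omega)$, where the last step moves the self-adjoint $C'$ across the inner product. On the other hand $\langle K^{\ast}f,K^{\ast}f\rangle_{\mathcal{A}}=\langle KK^{\ast}f,f\rangle_{\mathcal{A}}$ by the defining property of the adjoint. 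Consequently the lower estimate in \eqref{127}, namely $A\langle K^{\ast}f,K^{\ast}f\rangle_{\mathcal{A}}\leq\int_{\Omega}\langle\Lambda_{\omega}Cf,\Lambda_{\omega}C'f\rangle_{\mathcal{A}}\,d\mu(\omega)$, is literally the same statement as $A\langle KK^{\ast}f,f\rangle_{\mathcal{A}}\leq\langle S_{(C,C')}f,f\rangle_{\mathcal{A}}$ for all $f\in\mathcal{H}$.

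With these identities in place both implications are immediate. For the forward direction, if $\{\Lambda_{\omega}\}_{\omega\in\Omega}$ is a continuous $(C,C')$-controlled K-g-frame then by definition there is $A>0$ realizing the lower bound in \eqref{127}, which by the above is exactly $A\langle KK^{\ast}f,f\rangle_{\mathcal{A}}\leq\langle S_{(C,C')}f,f\rangle_{\mathcal{A}}$ for every $f$, i.e. $AKK^{\ast}\leq S_{(C,C')}$. For the converse, if $AKK^{\ast}\leq S_{(C,C')}$ for some $A>0$, then taking $\langle\,\cdot\,f,f\rangle_{\mathcal{A}}$ and using the same two identities recovers the lower estimate $A\langle K^{\ast}f,K^{\ast}f\rangle_{\mathcal{A}}\leq\int_{\Omega}\langle\Lambda_{\omega}Cf,\Lambda_{\omega}C'f\rangle_{\mathcal{A}}\,d\mu(\omega)$; the matching upper estimate is furnished by the standing hypothesis that $\{\Lambda_{\omega}\}_{\omega\in\Omega}$ is a continuous $(C,C')$-controlled Bessel K-g-frame, so both inequalities of \eqref{127} hold and $\{\Lambda_{\omega}\}_{\omega\in\Omega}$ is a continuous $(C,C')$-controlled K-g-frame.

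The only genuinely substantive point, rather than an obstacle, is the correct use of the order relation on positive elements of $End_{\mathcal{A}}^{\ast}(\mathcal{H})$: one must justify that an operator inequality between positive self-adjoint operators is equivalent to the pointwise inequality of the associated $\mathcal{A}$-valued quadratic forms, and invoke Proposition \ref{do} to know $S_{(C,C')}$ is self-adjoint so that the comparison is meaningful. Everything else is the bookkeeping of rewriting $\langle K^{\ast}f,K^{\ast}f\rangle_{\mathcal{A}}$ as $\langle KK^{\ast}f,f\rangle_{\mathcal{A}}$ and collapsing the frame integral into $\langle S_{(C,C')}f,f\rangle_{\mathcal{A}}$, which is where the commutativity and positivity of $C,C'$ are silently used.
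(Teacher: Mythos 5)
Your proof is correct and follows essentially the same route as the paper: both translate the lower frame inequality into the operator inequality $AKK^{\ast}\leq S_{(C,C')}$ via the identities $\langle K^{\ast}f,K^{\ast}f\rangle_{\mathcal{A}}=\langle KK^{\ast}f,f\rangle_{\mathcal{A}}$ and $\langle S_{(C,C')}f,f\rangle_{\mathcal{A}}=\int_{\Omega}\langle\Lambda_{\omega}Cf,\Lambda_{\omega}C'f\rangle_{\mathcal{A}}\,d\mu(\omega)$, and both use the Bessel hypothesis to supply the upper bound in the converse direction. Your write-up is in fact more careful than the paper's, since you explicitly justify the meaning of the operator order and invoke the self-adjointness of $S_{(C,C')}$.
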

\begin{proof}
	The family $\{\Lambda_\omega\}_{w\in \Omega }$ is a continuous $(C,C')-$controlled K- g-frame if and only if 
\begin{equation} \label{do2}
   A\langle K^{\ast}f,K^{\ast}f\rangle_\mathcal{A} \leq \int_{\Omega}\langle \Lambda_\omega C f,\Lambda_\omega C'f \rangle_\mathcal{A} d\mu(w) \leq B\langle f,f\rangle_\mathcal{A} , f\in \mathcal{H}.  
\end{equation}
   If and only if, 
\begin{equation}
    \langle AKK^\ast f,f\rangle_\mathcal{A} \leq \langle S_{(C,C')}f,f\rangle_\mathcal{A}\leq \langle Bf,f\rangle_\mathcal{A} , f\in \mathcal{H}	.
\end{equation}
    If $$A\langle K^{\ast}f,K^{\ast}f\rangle_\mathcal{A} \leq \langle Sf,f\rangle_\mathcal{A} ,$$
    and the family $\{\Lambda_\omega\}_{w\in \Omega }$ is a continuous $(C,C')-$controlled Bessel K-g-frame sequence then:
    $$\langle Sf,f\rangle_\mathcal{A} \leq B\langle f,f\rangle_\mathcal{A} , f\in \mathcal{H}.$$ 
    Wich completes the proof.
    
\end{proof}
\begin{theorem}
	Let $K \in End_{\mathcal{A}}^{\ast}(\mathcal{H})$ and $ C,C'\in {GL^{+}(\mathcal{H})}$.Suppose that   $K^{\ast}$ commute with C and C'. If $\{\Lambda_\omega\}_{w\in \Omega }$ is a continuous $(C,C')-$controlled  g-frame for Hilbert $C^{\ast}$- module $\mathcal{H}$  with respect to $\{\mathcal{K}_{w}: w\in\Omega\}$, then  $\{\Lambda_\omega K^{\ast}\}_{w\in \Omega }$ is a continuous $(C,C')-$controlled K- g-frame for Hilbert $C^{\ast}$- module $\mathcal{H}$  with respect to $\{\mathcal{K}_{w}: w\in\Omega\}$.
\end{theorem}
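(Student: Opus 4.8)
The plan is to reduce the desired K-g-frame inequality for the family $\{\Lambda_\omega K^{\ast}\}_{w\in\Omega}$ to the given $(C,C')$-controlled g-frame inequality for $\{\Lambda_\omega\}_{w\in\Omega}$, by substituting $K^{\ast}f$ in place of $f$ and exploiting the commutativity hypothesis. Recall that by assumption there exist constants $A,B>0$ with
\begin{equation*}
A\langle f,f\rangle_{\mathcal{A}} \leq \int_{\Omega}\langle \Lambda_\omega C f,\Lambda_\omega C'f\rangle_{\mathcal{A}}\, d\mu(w) \leq B\langle f,f\rangle_{\mathcal{A}}, \qquad f\in\mathcal{H}.
\end{equation*}

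First I would rewrite the central term associated with the candidate family $\{\Lambda_\omega K^{\ast}\}$. Since $K^{\ast}$ commutes with both $C$ and $C'$, for every $\omega$ and every $f\in\mathcal{H}$ one has $\Lambda_\omega K^{\ast} C f = \Lambda_\omega C K^{\ast} f$ and $\Lambda_\omega K^{\ast} C' f = \Lambda_\omega C' K^{\ast} f$, so that
\begin{equation*}
\int_{\Omega}\langle \Lambda_\omega K^{\ast} C f,\Lambda_\omega K^{\ast} C'f\rangle_{\mathcal{A}}\, d\mu(w) = \int_{\Omega}\langle \Lambda_\omega C (K^{\ast}f),\Lambda_\omega C' (K^{\ast}f)\rangle_{\mathcal{A}}\, d\mu(w).
\end{equation*}
This is the key reduction: the integral for the new family at $f$ equals the integral for the old family at $K^{\ast}f$.

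Next I would apply the given g-frame inequality with $f$ replaced by $K^{\ast}f\in\mathcal{H}$. The lower estimate yields immediately
\begin{equation*}
A\langle K^{\ast}f,K^{\ast}f\rangle_{\mathcal{A}} \leq \int_{\Omega}\langle \Lambda_\omega K^{\ast} C f,\Lambda_\omega K^{\ast} C'f\rangle_{\mathcal{A}}\, d\mu(w),
\end{equation*}
which is exactly the lower K-g-frame bound with constant $A$. For the upper estimate, the g-frame inequality first delivers a bound by $B\langle K^{\ast}f,K^{\ast}f\rangle_{\mathcal{A}}$; to convert this into a bound of the form required by Definition~\ref{17}, namely a multiple of $\langle f,f\rangle_{\mathcal{A}}$, I would invoke Lemma~\ref{2} applied to the adjointable operator $K^{\ast}$, which gives $\langle K^{\ast}f,K^{\ast}f\rangle_{\mathcal{A}} \leq \|K\|^{2}\langle f,f\rangle_{\mathcal{A}}$. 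Taking $B' = B\|K\|^{2}$ then completes the upper bound.

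There is no serious obstacle; the argument is essentially a change of variable $f\mapsto K^{\ast}f$ combined with the commutation relations. The only point demanding care is the upper end, where the K-g-frame definition insists the bound be controlled by $\langle f,f\rangle_{\mathcal{A}}$ rather than by $\langle K^{\ast}f,K^{\ast}f\rangle_{\mathcal{A}}$, and this is precisely where the boundedness of $K$ via Lemma~\ref{2} enters. I would conclude that $\{\Lambda_\omega K^{\ast}\}_{w\in\Omega}$ is a continuous $(C,C')$-controlled K-g-frame for $\mathcal{H}$ with respect to $\{\mathcal{K}_{w}: w\in\Omega\}$, with bounds $A$ and $B\|K\|^{2}$.
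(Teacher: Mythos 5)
Your proposal is correct and follows essentially the same route as the paper: substitute $K^{\ast}f$ into the $(C,C')$-controlled g-frame inequality, use the commutation of $K^{\ast}$ with $C$ and $C'$ to identify the resulting integral with the one for $\{\Lambda_\omega K^{\ast}\}_{w\in\Omega}$, and absorb the upper bound via $\langle K^{\ast}f,K^{\ast}f\rangle_{\mathcal{A}}\leq \|K\|^{2}\langle f,f\rangle_{\mathcal{A}}$. The only cosmetic difference is that the paper writes the upper constant as $B\|K^{\ast}\|^{2}$ while you write $B\|K\|^{2}$, which coincide since $\|K\|=\|K^{\ast}\|$.
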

\begin{proof}
	Let $\{\Lambda_\omega\}_{w\in \Omega }$ be a continuous $(C,C')-$controlled  g-frame for Hilbert $C^{\ast}$- module $\mathcal{H}$  with respect to $\{\mathcal{K}_{w}: w\in\Omega\}$, then, 
\begin{equation}
    A\langle f,f\rangle_\mathcal{A} \leq \int_{\Omega}\langle \Lambda_\omega C f,\Lambda_\omega C'f \rangle_\mathcal{A} d\mu(w) \leq B\langle f,f\rangle_\mathcal{A} , f\in \mathcal{H}.  
\end{equation}
    Hence, 
    $$
    A\langle K^{\ast}f,K^{\ast}f\rangle_\mathcal{A} \leq \int_{\Omega}\langle \Lambda_\omega C K^{\ast} f,\Lambda_\omega C' K^{\ast}f \rangle_\mathcal{A} d\mu(w) \leq B\langle K^{\ast}f,K^{\ast}f\rangle_\mathcal{A} , f\in \mathcal{H}.  $$
    Therefore, 
    $$
    A\langle K^{\ast}f,K^{\ast}f\rangle_\mathcal{A} \leq \int_{\Omega}\langle \Lambda_\omega  K^{\ast}C  f,\Lambda_\omega K^{\ast} C' f \rangle_\mathcal{A} d\mu(w) \leq B \|K^{\ast}\|^2\langle f,f \rangle_\mathcal{A} , f\in \mathcal{H}.  $$
    This concludes that $\{\Lambda_\omega K^{\ast}\}_{w\in \Omega }$ is a continuous $(C,C')-$controlled K- g-frame for Hilbert $C^{\ast}$- module $\mathcal{H}$  with respect to $\{\mathcal{K}_{w}: w\in\Omega\}$.	
\end{proof}
\begin{lemma}\label{do4}
	Let $K \in End_{\mathcal{A}}^{\ast}(\mathcal{H})$ and $ C,C'\in {GL^{+}(\mathcal{H})}$.Suppose that C and C' commute with each other and commute with S. Then 
	$\{\Lambda_\omega\}_{w\in \Omega }$ is a continuous $(C,C')-$controlled  K-g-frame for $\mathcal{H}$  with respect to $\{\mathcal{K}_{w}: w\in\Omega\}$ if and only if $\{\Lambda_\omega\}_{w\in \Omega }$ is a continuous $(C'C,I_{\mathcal{H}})-$controlled  K-g-frame for Hilbert $C^{\ast}$- module $\mathcal{H}$  with respect to $\{\mathcal{K}_{w}: w\in\Omega\}$.
\end{lemma}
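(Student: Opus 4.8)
The plan is to reduce the stated equivalence to a single identity between the two \emph{middle} terms of the defining inequalities, after which the equivalence of the two frame conditions is immediate. Writing out Definition \ref{17} for the pair $(C,C')$ and then for the pair $(C'C,I_{\mathcal{H}})$, both inequalities share the same left-hand side $A\langle K^\ast f,K^\ast f\rangle_\mathcal{A}$ and the same type of right-hand side $B\langle f,f\rangle_\mathcal{A}$; only the integral term differs. Hence it suffices to prove, for every $f\in\mathcal{H}$,
\begin{equation*}
\int_{\Omega}\langle \Lambda_\omega Cf,\Lambda_\omega C'f\rangle_\mathcal{A}\,d\mu(\omega)=\int_{\Omega}\langle \Lambda_\omega (C'C)f,\Lambda_\omega f\rangle_\mathcal{A}\,d\mu(\omega),
\end{equation*}
since once this holds, any pair of bounds $A,B$ valid for one family is automatically valid for the other, and in both directions, which is exactly the asserted ``if and only if''.

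First I would rewrite the left-hand integrand using the adjointability of $\Lambda_\omega$ together with the self-adjointness of $C'$ (recall that $C'\in GL^{+}(\mathcal{H})$ is positive, hence $C'=(C')^\ast$):
\begin{equation*}
\langle \Lambda_\omega Cf,\Lambda_\omega C'f\rangle_\mathcal{A}=\langle C'\Lambda_\omega^\ast\Lambda_\omega Cf,f\rangle_\mathcal{A}.
\end{equation*}
Next I would invoke the commutativity hypotheses to move $C'$ across $\Lambda_\omega^\ast\Lambda_\omega$ and combine it with $C$, producing $C'\Lambda_\omega^\ast\Lambda_\omega C=\Lambda_\omega^\ast\Lambda_\omega C'C$; reversing the adjoint step then gives $\langle \Lambda_\omega^\ast\Lambda_\omega(C'C)f,f\rangle_\mathcal{A}=\langle \Lambda_\omega(C'C)f,\Lambda_\omega f\rangle_\mathcal{A}$, which is precisely the right-hand integrand. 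Integrating over $\Omega$ then yields the required identity, and with it the equivalence of the two controlled K-g-frame conditions.

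The main obstacle is the commutation step, since the hypothesis is phrased in terms of the global frame operator $S=S_{(C,C')}$ rather than the individual building blocks $\Lambda_\omega^\ast\Lambda_\omega$. The clean way to handle this is to work under the integral sign instead of pointwise in $\omega$: because $C$ and $C'$ do not depend on $\omega$, they pull out of the Bochner integral, giving $S_{(C,C')}=C'\widetilde{S}C$ and $S_{(C'C,I_{\mathcal{H}})}=\widetilde{S}\,C'C$, where $\widetilde{S}=\int_{\Omega}\Lambda_\omega^\ast\Lambda_\omega\,d\mu(\omega)$. Using that $C$ and $C'$ commute with each other and with $S$, I would deduce $C'\widetilde{S}C=\widetilde{S}\,C'C$, whence $\langle S_{(C,C')}f,f\rangle_\mathcal{A}=\langle S_{(C'C,I_{\mathcal{H}})}f,f\rangle_\mathcal{A}$ for all $f\in\mathcal{H}$, which is exactly the integral identity above. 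The delicate point to justify carefully is reconciling ``commute with $S$'' with the commutation $C'\widetilde{S}=\widetilde{S}C'$ actually used (invertibility of $C$ lets one cancel the outer factor), and the legitimacy of moving the bounded operators $C,C'$ through the Bochner integral, which is routine once one recalls, as noted at the start of this section, that the $End_{\mathcal{A}}^{\ast}(\mathcal{H})$-valued map $\omega\mapsto\Lambda_\omega^\ast\Lambda_\omega$ is Bochner integrable.
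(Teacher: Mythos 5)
Your proposal is correct and takes essentially the same route as the paper: both reduce the equivalence to the identity $\int_{\Omega}\langle \Lambda_\omega Cf,\Lambda_\omega C'f\rangle_\mathcal{A}\,d\mu(\omega)=\int_{\Omega}\langle \Lambda_\omega C'Cf,\Lambda_\omega f\rangle_\mathcal{A}\,d\mu(\omega)$ by first establishing $S_{(C,C')}=C'SC$ with $S=\int_{\Omega}\Lambda_\omega^{\ast}\Lambda_\omega\,d\mu(\omega)$ (pulling the $\omega$-independent operators through the integral) and then invoking the commutation hypotheses to rewrite $C'SC=SC'C$, so that the two frame conditions hold with the same bounds. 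Your remark that the commutation must be applied to the integrated operator $S$ rather than pointwise to $\Lambda_\omega^{\ast}\Lambda_\omega$ is precisely how the paper's proof proceeds as well.
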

\begin{proof}
	For all $f \in \mathcal{H} $ we have,
\begin{align*}
    \langle (C')^{-1}S_{(C,C')} C^{-1}f,f\rangle_\mathcal{A} &=\int_{\Omega}\langle C' \Lambda_\omega^{\ast} \Lambda_\omega  C  C^{-1}f, (C')^{-1} f \rangle_\mathcal{A} d\mu(w)\\ 
    &=\int_{\Omega}\langle  \Lambda_\omega^{\ast} \Lambda_\omega  f,  f \rangle_\mathcal{A} d\mu(w)\\&=\langle Sf , f\rangle_\mathcal{A},
\end{align*}
    where $$ Sf=\int_{\Omega}\Lambda_\omega^{\ast} \Lambda_\omega  f d\mu(w).$$
    Hence, $$S=(C')^{-1}S_{(C,C')} C^{-1}$$
    For any $f \in \mathcal{H}$, we have,
\begin{align*}
    \int_{\Omega}\langle  \Lambda_\omega  C f,  \Lambda_\omega C' f \rangle_\mathcal{A} d\mu(w)&=\int_{\Omega}\langle     C' \Lambda_\omega^{\ast} \Lambda_\omega C f ,f\rangle_\mathcal{A} d\mu(w)\\
    &=\langle S_{(C,C')}f , f\rangle_\mathcal{A}\\
    &=\langle C'SCf , f\rangle_\mathcal{A}\\
    &=\langle CSC'f , f\rangle_\mathcal{A}\\
    &=\langle SC'Cf , f\rangle_\mathcal{A}\\
    &=\int_{\Omega}\langle  \Lambda_\omega^{\ast} \Lambda_\omega C'C f ,f\rangle_\mathcal{A} d\mu(w)\\
    &=\int_{\Omega}\langle   \Lambda_\omega C'C f ,\Lambda_\omega f\rangle_\mathcal{A} d\mu(w)
\end{align*}
    Hence, $\{\Lambda_\omega\}_{w\in \Omega }$ is a continuous $(CC',I_{\mathcal{H}})-$controlled  K-g-frame for $\mathcal{H}$ with bounds A and B respect to $\{\mathcal{K}_{w}: w\in\Omega\}$ if and only if,
    $$A\langle K^{\ast}f,K^{\ast}f\rangle_\mathcal{A} \leq \int_{\Omega}\langle \Lambda_\omega  C' C  f,\Lambda_\omega   f \rangle_\mathcal{A} d\mu(w) \leq B \langle f,f \rangle_\mathcal{A} , f\in \mathcal{H}.$$
    The results holds.
\end{proof}
\begin{lemma}
	Let $K \in End_{\mathcal{A}}^{\ast}(\mathcal{H})$ and $ C,C'\in {GL^{+}(\mathcal{H})}$. Then $\{\Lambda_\omega\}_{w\in \Omega }$ is a continuous $(C,C')-$controlled  K-g-frame for Hilbert $C^{\ast}$-module$\mathcal{H}$ with  respect to $\{\mathcal{K}_{w}: w\in\Omega\}$ if and only if $\{\Lambda_\omega\}_{w\in \Omega }$ is a continuous $((C'C)^{\frac{1}{2}},((C'C)^{\frac{1}{2}})-$controlled  K-g-frame for Hilbert $\mathcal{H}$ with  respect to $\{\mathcal{K}_{w}: w\in\Omega\}$.
	 
\end{lemma}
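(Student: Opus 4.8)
The plan is to reduce both frame conditions to a single common inequality by showing that the central ``control term'' is literally the same function of $f$ in both cases. Throughout I assume the standing hypotheses used for the frame operator $S_{(C,C')}$ to be defined, namely that $C$ and $C'$ commute with each other and with each $\Lambda_\omega^\ast\Lambda_\omega$; these are needed here and appear to be implicit in the statement. Once the equality of control terms is in hand, the lower bound $A\langle K^\ast f,K^\ast f\rangle_\mathcal{A}$ and the upper bound $B\langle f,f\rangle_\mathcal{A}$ in \eqref{127} are untouched, so the $(C,C')$-controlled condition holds with bounds $A,B$ if and only if the $((C'C)^{1/2},(C'C)^{1/2})$-controlled condition holds with the same bounds.

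First I would check admissibility of the new control pair: since $C,C'\in GL^+(\mathcal{H})$ commute, the product $C'C$ is positive (write $C'C=(C')^{1/2}C(C')^{1/2}$) and invertible, so $(C'C)^{1/2}$ is well defined, positive, self-adjoint and again lies in $GL^+(\mathcal{H})$. The heart of the argument is then the chain of equalities, valid for every $f\in\mathcal{H}$ and each $\omega\in\Omega$,
\begin{align*}
\langle \Lambda_\omega C f,\Lambda_\omega C'f\rangle_\mathcal{A}
&=\langle C'\Lambda_\omega^\ast\Lambda_\omega C f,f\rangle_\mathcal{A}
=\langle \Lambda_\omega^\ast\Lambda_\omega\,C'C\,f,f\rangle_\mathcal{A}\\
&=\langle (C'C)^{1/2}\Lambda_\omega^\ast\Lambda_\omega (C'C)^{1/2}f,f\rangle_\mathcal{A}
=\langle \Lambda_\omega (C'C)^{1/2}f,\Lambda_\omega (C'C)^{1/2}f\rangle_\mathcal{A},
\end{align*}
where the first equality uses self-adjointness of $C'$ together with the adjoint relation for $\Lambda_\omega$, the second uses that $C$ and $C'$ commute with $\Lambda_\omega^\ast\Lambda_\omega$, the third redistributes $(C'C)^{1/2}$ across $\Lambda_\omega^\ast\Lambda_\omega$, and the last uses self-adjointness of $(C'C)^{1/2}$. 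Integrating over $\Omega$ against $d\mu(w)$ then yields
\[
\int_{\Omega}\langle \Lambda_\omega C f,\Lambda_\omega C'f\rangle_\mathcal{A}\,d\mu(w)
=\int_{\Omega}\langle \Lambda_\omega (C'C)^{1/2}f,\Lambda_\omega (C'C)^{1/2}f\rangle_\mathcal{A}\,d\mu(w).
\]

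The one step requiring more than routine substitution is the third equality, where $\Lambda_\omega^\ast\Lambda_\omega\,C'C$ is rewritten as $(C'C)^{1/2}\Lambda_\omega^\ast\Lambda_\omega (C'C)^{1/2}$; this is the point I expect to need care. It is justified by the continuous functional calculus: since $C'C$ commutes with $\Lambda_\omega^\ast\Lambda_\omega$, so does every continuous function of $C'C$, in particular its positive square root, giving $(C'C)^{1/2}\Lambda_\omega^\ast\Lambda_\omega=\Lambda_\omega^\ast\Lambda_\omega (C'C)^{1/2}$ and hence the desired factorization. Everything else is a direct reading of the defining inequality \eqref{127}, so with the two control terms shown equal the stated equivalence follows at once, with identical frame bounds in both directions.
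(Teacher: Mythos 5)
Your proof is correct and takes essentially the same route as the paper: the paper disposes of this lemma by repeating the proof of Lemma \ref{do4}, whose whole content is exactly your reduction, namely that under the standing commutation hypotheses the control term $\int_{\Omega}\langle \Lambda_\omega C f,\Lambda_\omega C'f\rangle_\mathcal{A}\,d\mu(w)$ equals the control term of the new pair, so both frame inequalities hold with identical bounds $A$ and $B$. The only cosmetic difference is that you commute $(C'C)^{1/2}$ past each $\Lambda_\omega^{\ast}\Lambda_\omega$ pointwise under the integral via the functional calculus, while the paper routes the same computation through the frame operator $S$ and its commutation with $C$ and $C'$; your version also has the merit of spelling out the square-root commutation and the membership $(C'C)^{1/2}\in GL^{+}(\mathcal{H})$, which the paper leaves implicit.
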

\begin{proof}
	The proof is similar as proof of lemma \ref{do4}.
	
\end{proof}
\begin{lemma}
	Let $K \in End_{\mathcal{A}}^{\ast}(\mathcal{H})$ and $ C,C'\in {GL^{+}(\mathcal{H})}$. Suppose that $CK=KC$ , $C'K=KC'$ and $CS=SC$. Then  $\{\Lambda_\omega\}_{w\in \Omega }$ is a continuous $(C,C')-$controlled  K-g-frame for  $\mathcal{H}$  with respect to $\{\mathcal{K}_{w}: w\in\Omega\}$if and only if  $\{\Lambda_\omega\}_{w\in \Omega }$ is a continuous  K- g-frame for  $\mathcal{H}$  with respect to $\{\mathcal{K}_{w}: w\in\Omega\}$.
\end{lemma}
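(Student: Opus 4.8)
The plan is to reduce the equivalence to a comparison between the controlled frame operator $S_{(C,C')}$ and the ordinary continuous g-frame operator $Sf=\int_{\Omega}\Lambda_\omega^{\ast}\Lambda_\omega f\,d\mu(w)$, exactly in the spirit of Lemma \ref{do4}. First I would record the identity produced by the standing commutativity assumptions. Since $C$ and $C'$ commute with each $\Lambda_\omega^{\ast}\Lambda_\omega$ (hence with $S$) and with each other, and since $C,C'$ are selfadjoint, for every $f\in\mathcal{H}$ one has
\[
\int_{\Omega}\langle \Lambda_\omega Cf,\Lambda_\omega C'f\rangle_\mathcal{A}\,d\mu(w)=\langle S_{(C,C')}f,f\rangle_\mathcal{A}=\langle C'C\,Sf,f\rangle_\mathcal{A}.
\]
Thus the continuous $(C,C')$-controlled K-g-frame inequality is precisely $A\langle K^{\ast}f,K^{\ast}f\rangle_\mathcal{A}\leq\langle C'C\,Sf,f\rangle_\mathcal{A}\leq B\langle f,f\rangle_\mathcal{A}$, while the uncontrolled continuous K-g-frame inequality is the same statement with $C'C\,S$ replaced by $S$.

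The heart of the argument is a two-sided comparison of $\langle C'C\,Sf,f\rangle_\mathcal{A}$ with $\langle Sf,f\rangle_\mathcal{A}$. Because $C,C'\in GL^{+}(\mathcal{H})$, the product $C'C$ is again positive, invertible and adjointable, so there exist scalars $m,M>0$ with $mI\leq C'C\leq MI$ (the lower bound comes from invertibility, $C'C\geq\|(C'C)^{-1}\|^{-1}I$). Using that $S$ is positive with positive square root $S^{\frac{1}{2}}\in End_{\mathcal{A}}^{\ast}(\mathcal{H})$ and that $C'C$ commutes with $S$ (hence with $S^{\frac{1}{2}}$ by functional calculus), I would write $C'C\,S=S^{\frac{1}{2}}(C'C)S^{\frac{1}{2}}$ and set $g=S^{\frac{1}{2}}f$, so that $\langle C'C\,Sf,f\rangle_\mathcal{A}=\langle C'Cg,g\rangle_\mathcal{A}$ and $\langle g,g\rangle_\mathcal{A}=\langle Sf,f\rangle_\mathcal{A}$. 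The operator bound $mI\leq C'C\leq MI$ then yields the sandwich
\[
m\langle Sf,f\rangle_\mathcal{A}\leq\langle C'C\,Sf,f\rangle_\mathcal{A}\leq M\langle Sf,f\rangle_\mathcal{A},\qquad f\in\mathcal{H}.
\]

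With this sandwich both implications are immediate by rescaling the bounds. If $\{\Lambda_\omega\}_{w\in\Omega}$ is a continuous $(C,C')$-controlled K-g-frame with bounds $A,B$, then $\langle Sf,f\rangle_\mathcal{A}\geq M^{-1}\langle C'C\,Sf,f\rangle_\mathcal{A}\geq (A/M)\langle K^{\ast}f,K^{\ast}f\rangle_\mathcal{A}$ and $\langle Sf,f\rangle_\mathcal{A}\leq m^{-1}\langle C'C\,Sf,f\rangle_\mathcal{A}\leq (B/m)\langle f,f\rangle_\mathcal{A}$, so it is a continuous K-g-frame with bounds $A/M$ and $B/m$. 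Conversely, if it is a continuous K-g-frame with bounds $A,B$, the same sandwich gives $mA\langle K^{\ast}f,K^{\ast}f\rangle_\mathcal{A}\leq\langle C'C\,Sf,f\rangle_\mathcal{A}\leq MB\langle f,f\rangle_\mathcal{A}$, i.e. a continuous $(C,C')$-controlled K-g-frame with bounds $mA,MB$.

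The step I expect to be delicate is the operator sandwich, since it has to be carried out with $\mathcal{A}$-valued inner products rather than scalars: I would justify it through the continuous functional calculus in the $C^{\ast}$-algebra $End_{\mathcal{A}}^{\ast}(\mathcal{H})$ (existence of $S^{\frac{1}{2}}$ and its commutation with $C'C$) together with Lemma \ref{2}, which converts $mI\leq C'C\leq MI$ into the $\mathcal{A}$-valued estimates applied to $g=S^{\frac{1}{2}}f$. The commutation hypotheses $CK=KC$ and $C'K=KC'$ guarantee $K^{\ast}C=CK^{\ast}$ and $K^{\ast}C'=C'K^{\ast}$; in the comparison above the lower $KK^{\ast}$-term literally coincides on the two sides, so these relations serve only to keep that term aligned, the essential use of commutativity being in the identity $S_{(C,C')}=C'C\,S$ and in forming the square root $S^{\frac{1}{2}}$.
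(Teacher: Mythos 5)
Your proposal is correct, and the underlying mechanism --- pinching $C'C$ between scalar multiples of the identity and transferring this to the $\mathcal{A}$-valued quadratic forms --- is the same one the paper uses; but your execution differs in two ways worth noting. First, you prove a single symmetric sandwich $m\langle Sf,f\rangle_\mathcal{A}\leq\langle C'CSf,f\rangle_\mathcal{A}\leq M\langle Sf,f\rangle_\mathcal{A}$ by conjugating with $S^{\frac{1}{2}}$, and both implications then follow by rescaling the bounds; the paper instead runs two separate, asymmetric computations: for the direction ``continuous K-g-frame $\Rightarrow$ controlled'' it asserts $mAKK^{\ast}\leq SC=CS\leq MBI_{\mathcal{H}}$ and then $mm'AKK^{\ast}\leq C'SC\leq MM'BI_{\mathcal{H}}$ (with $mI_{\mathcal{H}}\leq C\leq MI_{\mathcal{H}}$, $m'I_{\mathcal{H}}\leq C'\leq M'I_{\mathcal{H}}$) without explaining why the inequalities survive multiplication by the positive commuting factors --- the justification is exactly your $S^{\frac{1}{2}}$-conjugation argument, so your write-up in fact fills in a step the paper leaves implicit. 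Second, for the converse the paper inserts $(CC')^{\pm\frac{1}{2}}$ around $K^{\ast}f$ and moves these factors through $K^{\ast}$, which is where the hypotheses $CK=KC$ and $C'K=KC'$ are genuinely used; your sandwich never touches the $K^{\ast}$-term, so, as you yourself observe, those two hypotheses are redundant in your argument, making your proof marginally stronger than the paper's. Both arguments rest on the same unstated standing assumptions ($CC'=C'C$ and commutation of $C$, $C'$ with each $\Lambda_\omega^{\ast}\Lambda_\omega$, hence with $S$), which the paper imposes when defining $S_{(C,C')}$, and your conversion of $mI_{\mathcal{H}}\leq C'C\leq MI_{\mathcal{H}}$ into $\mathcal{A}$-valued estimates is sound: it follows from positivity of $\langle Pg,g\rangle_\mathcal{A}$ for positive $P\in End_{\mathcal{A}}^{\ast}(\mathcal{H})$, or equivalently from Lemma \ref{2} applied to $(C'C)^{\frac{1}{2}}$ and $(C'C)^{-\frac{1}{2}}$.
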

\begin{proof}
	Assume that $\{\Lambda_\omega\}_{w\in \Omega }$ is a continuous  K- g-frame for  $\mathcal{H}$  with bounds A and B with respect to $\{\mathcal{K}_{w}: w\in\Omega\}$.Then,
\begin{equation}
    A\langle K K^{\ast}f,f\rangle_\mathcal{A} \leq \langle S  f,   f \rangle_\mathcal{A}  \leq B \langle f,f \rangle_\mathcal{A} , f\in \mathcal{H}	.
\end{equation}
    Since C,C' are bounded positive operators, there exist constants m, m',M ,M'
    $(0<m,m',M,M'< \infty)$ such that, 
    $$mI_{\mathcal{H}} \leq C \leq MI_{\mathcal{H}}$$
    $$m'I_{\mathcal{H}} \leq C'\leq M'I_{\mathcal{H}}.$$
	From, 
	$$\langle CSf,f\rangle_\mathcal{A} = \langle f,SCf\rangle_\mathcal{A} = \langle f,CSf\rangle_\mathcal{A}.$$
	We have, 
	$$mAK K^{\ast} \leq SC=CS \leq MBI_{\mathcal{H}}.$$
    Then we have; 
	\begin{equation}
    mm'AK K^{\ast} \leq C'SC \leq MM'BI_{\mathcal{H}}.
	\end{equation}
	Therefore, 
\begin{equation}
    mm'A\langle K^{\ast}f, K^{\ast}f \rangle_\mathcal{A} \leq   \int_{\Omega}\langle \Lambda_\omega   C  f,\Lambda_\omega C'  f \rangle_\mathcal{A} d\mu(w) \leq MM'B \langle f,f \rangle_\mathcal{A} , f\in \mathcal{H}.
\end{equation}
    Hence,  $\{\Lambda_\omega\}_{w\in \Omega }$ is a continuous $(C,C')-$controlled  K-g-frame for  $\mathcal{H}$  with respect to $\{\mathcal{K}_{w}: w\in\Omega\}$.\\
    Conversely, assume that $\{\Lambda_\omega\}_{w\in \Omega }$ is a continuous $(C,C')-$controlled  K-g-frame for  $\mathcal{H}$  with respect to $\{\mathcal{K}_{w}: w\in\Omega\}$ with bounds A and B. On one hand we have For any $f \in \mathcal{H} $, on one hand we have,
\begin{align*}
    A\langle K^{\ast}f, K^{\ast}f \rangle_\mathcal{A} &= A\langle (CC')^{\frac{1}{2}}(CC')^{\frac{-1}{2}}K^{\ast}f, (CC')^{\frac{1}{2}}(CC')^{\frac{-1}{2}}K^{\ast}f \rangle_\mathcal{A}\\
    &=  A\langle (CC')^{\frac{1}{2}}K^{\ast}(CC')^{\frac{-1}{2}}f,(CC')^{\frac{1}{2}}K^{\ast}(CC')^{\frac{-1}{2}} f \rangle_\mathcal{A}\\
    &\leq \|(CC')^{\frac{1}{2}}\|^2 \int_{\Omega}\langle \Lambda_\omega C (CC')^{\frac{-1}{2}}  f,\Lambda_\omega C' (CC')^{\frac{-1}{2}} f \rangle_\mathcal{A} d\mu(w)\\
    &=\|(CC')^{\frac{1}{2}}\|^2 \langle S C (CC')^{\frac{-1}{2}}  f, C' (CC')^{\frac{-1}{2}} f \rangle_\mathcal{A}\\
    &=\|(CC')^{\frac{1}{2}}\|^2 \langle S C^{\frac{1}{2}} (C')^{\frac{-1}{2}}  f, (C')^{\frac{1}{2}} (C)^{\frac{-1}{2}} f \rangle_\mathcal{A}\\
    &=\|(CC')^{\frac{1}{2}}\|^2 \langle (C)^{\frac{-1}{2}} (C')^{\frac{1}{2}} S C^{\frac{1}{2}} (C')^{\frac{-1}{2}}  f,  f \rangle_\mathcal{A}\\
    &=\|(CC')^{\frac{1}{2}}\|^2 \langle S f,  f \rangle_\mathcal{A}.\\
\end{align*}
    So,
\begin{align*}
    A\|(CC')^{\frac{1}{2}}\|^{-2}\langle K^{\ast}f, K^{\ast}f   \rangle_\mathcal{A} &\leq \int_{\Omega}\langle \Lambda_\omega  f,\Lambda_\omega  f \rangle_\mathcal{A} d\mu(w)\\
    &= \langle S f,  f \rangle_\mathcal{A}.
\end{align*}
  
    On other hand, we have, 
\begin{align*}
    \int_{\Omega}\langle \Lambda_\omega  f,\Lambda_\omega  f \rangle_\mathcal{A} d\mu(w)&=\langle S f,  f \rangle_\mathcal{A}\\
    &=\langle (CC')^{\frac{-1}{2}}(CC')^{\frac{1}{2}}S f,  f \rangle_\mathcal{A}\\
    &= \langle (CC')^{\frac{1}{2}}S f,  (CC')^{\frac{-1}{2}} f \rangle_\mathcal{A}\\
    &= \langle  (CC')(CC')^{\frac{-1}{2}}S f,  (CC')^{\frac{-1}{2}} f \rangle_\mathcal{A}\\
    &=\langle C'SC (CC')^{\frac{-1}{2}} f,  (CC')^{\frac{-1}{2}} f \rangle_\mathcal{A}\\
    &\leq B \langle (CC')^{\frac{-1}{2}} f,  (CC')^{\frac{-1}{2}}f \rangle_\mathcal{A}\\
    &\leq B \|(CC')^{\frac{-1}{2}}\|^2 \langle  f,  f \rangle_\mathcal{A}.
\end{align*}
Therfore, $\{\Lambda_\omega\}_{w\in \Omega }$ is a continuous   K-g-frame for  $\mathcal{H}$  with respect to $\{\mathcal{K}_{w}: w\in\Omega\}$ with bounds $A\|(CC')^{\frac{1}{2}}\|^{-2}$ and $B\|(CC')^{\frac{-1}{2}}\|^2$.

\end{proof}
\begin{proposition}
	Let $K\in End_{\mathcal{A}}^{\ast}(\mathcal{H})$ and $ C,C'\in {GL^{+}(\mathcal{H})}$. Let $\{\Lambda_\omega\}_{w\in \Omega }$ be a continuous $(C,C')-$controlled  K-g-frame for  $\mathcal{H}$  with respect to $\{\mathcal{K}_{w}: w\in\Omega\}$. Suppose that $\overline{R(K^{\ast})}$ is orthogonally complemented. If $T\in End_{\mathcal{A}}^{\ast}(\mathcal{H})$ with $R(T)\subset R(K)$, then $\{\Lambda_\omega\}_{w\in \Omega }$  is a continuous $(C,C')-$controlled  T-g-frame for  $\mathcal{H}$  with respect to $\{\mathcal{K}_{w}: w\in\Omega\}$ .
	
\end{proposition}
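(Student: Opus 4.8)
The plan is to reduce the statement to the Douglas-type factorization result recorded as Theorem \ref{do5}, since the condition $R(T)\subseteq R(K)$ together with the orthogonal complementedness of $\overline{R(K^\ast)}$ is precisely the hypothesis that theorem requires. First I observe that the upper frame inequality I must establish is identical to the one already satisfied by the $(C,C')$-controlled K-g-frame, namely $\int_{\Omega}\langle \Lambda_\omega C f,\Lambda_\omega C'f \rangle_\mathcal{A} d\mu(w)\leq B\langle f,f\rangle_\mathcal{A}$; it involves neither $K$ nor $T$, so it carries over verbatim with the same upper bound $B$. Hence the entire content of the proposition lies in producing a lower bound of the form $A'\langle T^\ast f,T^\ast f\rangle_\mathcal{A}\leq \int_{\Omega}\langle \Lambda_\omega C f,\Lambda_\omega C'f \rangle_\mathcal{A} d\mu(w)$.

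To obtain this lower bound I would apply Theorem \ref{do5} with the identifications $E=G=F=\mathcal{H}$, taking the theorem's operator $T$ to be our $K$ and its operator $T'$ to be our $T$. The hypothesis that $\overline{R(K^\ast)}$ is orthogonally complemented is exactly the standing assumption of that theorem, and the inclusion $R(T)\subseteq R(K)$ is its condition $(4)$. The implication $(4)\Rightarrow(1)$ then yields a scalar $\lambda>0$ with $TT^\ast\leq \lambda\,KK^\ast$. Translating this operator inequality into the $\mathcal{A}$-valued inner product and using selfadjointness gives $\langle T^\ast f,T^\ast f\rangle_\mathcal{A}\leq \lambda\,\langle K^\ast f,K^\ast f\rangle_\mathcal{A}$ for every $f\in\mathcal{H}$.

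Combining the last estimate with the lower frame bound $A\langle K^\ast f,K^\ast f\rangle_\mathcal{A}\leq \int_{\Omega}\langle \Lambda_\omega C f,\Lambda_\omega C'f \rangle_\mathcal{A} d\mu(w)$ of the original K-g-frame, I obtain
\[
\frac{A}{\lambda}\langle T^\ast f,T^\ast f\rangle_\mathcal{A}\leq A\langle K^\ast f,K^\ast f\rangle_\mathcal{A}\leq \int_{\Omega}\langle \Lambda_\omega C f,\Lambda_\omega C'f \rangle_\mathcal{A} d\mu(w),
\]
so with the lower bound $A'=A/\lambda$ and the inherited upper bound $B$ the family $\{\Lambda_\omega\}_{w\in\Omega}$ is a continuous $(C,C')$-controlled T-g-frame with respect to $\{\mathcal{K}_{w}: w\in\Omega\}$, as claimed.

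The main obstacle is conceptual rather than computational: one must recognize that the purely set-theoretic range inclusion $R(T)\subseteq R(K)$ has to be upgraded to the quantitative operator domination $TT^\ast\leq \lambda\,KK^\ast$, which is exactly what the orthogonal complementedness hypothesis buys through Theorem \ref{do5}. Without that theorem one would be forced to build the factorization $T=KD$ by hand and then control the norm of $D$, which is delicate in the Hilbert $C^\ast$-module setting; invoking the cited equivalence sidesteps that difficulty entirely and reduces the remainder of the argument to a one-line chaining of inequalities.
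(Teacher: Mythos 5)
Your proposal is correct and follows essentially the same route as the paper: both invoke the Douglas-type result (Theorem \ref{do5}) with the range inclusion $R(T)\subseteq R(K)$ and the orthogonal complementedness of $\overline{R(K^{\ast})}$ to obtain $TT^{\ast}\leq \lambda KK^{\ast}$, then rewrite this as $\langle T^{\ast}f,T^{\ast}f\rangle_{\mathcal{A}}\leq \lambda\langle K^{\ast}f,K^{\ast}f\rangle_{\mathcal{A}}$ and chain it with the original lower frame bound, keeping the upper bound $B$ unchanged. The only difference is cosmetic: you spell out the passage from the operator inequality to the $\mathcal{A}$-valued inner product inequality, which the paper leaves implicit.
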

\begin{proof}
	Let $\{\Lambda_\omega\}_{w\in \Omega }$ be a continuous $(C,C')-$controlled  K-g-frame for  $\mathcal{H}$  with respect to $\{\mathcal{K}_{w}: w\in\Omega\}$. Then there exists $A,B > 0$  such that, 
	$$A\langle K^{\ast}f, K^{\ast}f   \rangle _\mathcal{A}\leq \int_{\Omega}\langle \Lambda_\omega   C f,\Lambda_\omega C' f \rangle_\mathcal{A} d\mu(w)\leq B  \langle  f,  f \rangle_\mathcal{A} .$$
	From lemma \ref{do5} and $R(T)\subset R(K)$, there exist some $ m > 0 $ such that 
	$$TT^{\ast} \leq m KK^{\ast}.$$
	Hence, 
	$$\frac{A}{m}\langle T^{\ast}f, T^{\ast}f   \rangle_\mathcal{A} \leq A\langle K^{\ast}f, K^{\ast}f   \rangle _\mathcal{A} \leq \int_{\Omega}\langle \Lambda_\omega   C f,\Lambda_\omega C' f \rangle_\mathcal{A} d\mu(w)\leq B  \langle  f,  f \rangle_\mathcal{A}.$$
	So, $\{\Lambda_\omega\}_{w\in \Omega }$  is a continuous $(C,C')-$controlled  T-g-frame for  $\mathcal{H}$  with respect to $\{\mathcal{K}_{w}: w\in\Omega\}$ .
	
\end{proof}
\begin{theorem}
	Let $K_1, K_2\in End_{\mathcal{A}}^{\ast}(\mathcal{H})$ such that
	 $ R(K_1)\perp R(K_2) $. If $\{\Lambda_\omega\}_{w\in \Omega }$  is a continuous $(C,C')-$controlled  $K_1$-g-frame for  $\mathcal{H}$ as well a $ K_2$-g-frame for  $\mathcal{H}$ with respect to $\{\mathcal{K}_{w}: w\in\Omega\}$ and $\alpha $ , $\beta $ are scalers. Then  $\{\Lambda_\omega\}_{w\in \Omega }$ is a continuous $(C,C')-$controlled  $(\alpha K_1+\beta K_2)$-g-frame  and a continuous $(C,C')-$controlled  $(K_1 K_2)$-g-frame for  $\mathcal{H}$  with respect to $\{\mathcal{K}_{w}: w\in\Omega\}$ .	
\end{theorem}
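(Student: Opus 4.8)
The plan is to treat the two assertions separately, after observing that the upper (Bessel) bound is common to every controlled $K$-g-frame: the middle quantity
\[
I(f):=\int_{\Omega}\langle \Lambda_\omega Cf,\Lambda_\omega C'f\rangle_{\mathcal{A}}\,d\mu(w)
\]
does not involve the operator in the left-hand inequality, and since $\{\Lambda_\omega\}_{w\in\Omega}$ is already a continuous $(C,C')$-controlled $K_1$-g-frame we have $I(f)\le B_1\langle f,f\rangle_{\mathcal{A}}$ for all $f\in\mathcal{H}$. Hence for both $\alpha K_1+\beta K_2$ and $K_1K_2$ only the lower inequality has to be produced, with an appropriate positive constant. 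Throughout I write $A_1,B_1$ and $A_2,B_2$ for the bounds coming from the $K_1$- and $K_2$-frame hypotheses, so that $A_i\langle K_i^\ast f,K_i^\ast f\rangle_{\mathcal{A}}\le I(f)$ for $i=1,2$.

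For the operator $\alpha K_1+\beta K_2$ I would start from $(\alpha K_1+\beta K_2)^\ast f=\bar\alpha K_1^\ast f+\bar\beta K_2^\ast f$ and control its $\mathcal{A}$-valued square by the elementary Hilbert $C^\ast$-module estimate $\langle u+v,u+v\rangle_{\mathcal{A}}\le 2\langle u,u\rangle_{\mathcal{A}}+2\langle v,v\rangle_{\mathcal{A}}$ (which follows from $\langle u-v,u-v\rangle_{\mathcal{A}}\ge 0$), applied with $u=\bar\alpha K_1^\ast f$ and $v=\bar\beta K_2^\ast f$. This yields
\[
\langle(\alpha K_1+\beta K_2)^\ast f,(\alpha K_1+\beta K_2)^\ast f\rangle_{\mathcal{A}}\le 2|\alpha|^2\langle K_1^\ast f,K_1^\ast f\rangle_{\mathcal{A}}+2|\beta|^2\langle K_2^\ast f,K_2^\ast f\rangle_{\mathcal{A}}.
\]
Feeding in the two lower frame inequalities $\langle K_i^\ast f,K_i^\ast f\rangle_{\mathcal{A}}\le A_i^{-1}I(f)$ then gives the desired lower bound with constant $A=\bigl(2|\alpha|^2A_1^{-1}+2|\beta|^2A_2^{-1}\bigr)^{-1}$. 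I expect the hypothesis $R(K_1)\perp R(K_2)$ to be intended precisely here, to kill the cross term $\langle K_1^\ast f,K_2^\ast f\rangle_{\mathcal{A}}$ and replace the factors $2$ by $1$; I would keep the robust estimate above and use the orthogonality only to sharpen the constant where it genuinely applies.

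For the product $K_1K_2$ I would not expand but argue at the level of operators. Writing $(K_1K_2)(K_1K_2)^\ast=K_1(K_2K_2^\ast)K_1^\ast$ and using $K_2K_2^\ast\le\|K_2\|^2 I_{\mathcal{H}}$ (Lemma \ref{2} applied to $K_2^\ast$), monotonicity of $X\mapsto K_1XK_1^\ast$ on positive elements gives $(K_1K_2)(K_1K_2)^\ast\le\|K_2\|^2 K_1K_1^\ast$. Pairing with $f$ converts this into
\[
\langle(K_1K_2)^\ast f,(K_1K_2)^\ast f\rangle_{\mathcal{A}}\le\|K_2\|^2\langle K_1^\ast f,K_1^\ast f\rangle_{\mathcal{A}}\le \frac{\|K_2\|^2}{A_1}\,I(f),
\]
so $\{\Lambda_\omega\}_{w\in\Omega}$ is a continuous $(C,C')$-controlled $(K_1K_2)$-g-frame with lower bound $A_1\|K_2\|^{-2}$ and the common Bessel bound. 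Note this half really only uses the $K_1$-frame property together with the boundedness of $K_2$.

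The main obstacle is the cross term in the sum together with the exact meaning of $R(K_1)\perp R(K_2)$: this is equivalent to $K_1^\ast K_2=0$ rather than to $K_1K_2^\ast=0$, so $\langle K_1^\ast f,K_2^\ast f\rangle_{\mathcal{A}}$ need not vanish, and I would therefore absorb it through the parallelogram estimate (at the cost of the harmless factors $2$) instead of trying to cancel it. Everything else is routine: dividing by the scalar bounds $A_i$ and the monotonicity $0\le X\le Y\Rightarrow K_1XK_1^\ast\le K_1YK_1^\ast$ are legitimate for $\mathcal{A}$-valued inner products by the order structure of $\mathcal{A}^+$ and Lemma \ref{2}. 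An equivalent alternative is to phrase the argument through the frame operator, using the characterization that $\{\Lambda_\omega\}_{w\in\Omega}$ is a controlled $K$-g-frame iff $AKK^\ast\le S_{(C,C')}$, combined with Theorem \ref{do5} applied to the inclusion $R(K_1K_2)\subseteq R(K_1)$, whenever the orthogonal-complementation hypothesis on $\overline{R(K_1^\ast)}$ is available.
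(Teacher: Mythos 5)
Your proof is correct, and on the product $K_1K_2$ it coincides with the paper's argument: write $(K_1K_2)^{\ast}=K_2^{\ast}K_1^{\ast}$, estimate $\langle K_2^{\ast}K_1^{\ast}f,K_2^{\ast}K_1^{\ast}f\rangle_{\mathcal{A}}\le\|K_2\|^{2}\langle K_1^{\ast}f,K_1^{\ast}f\rangle_{\mathcal{A}}$, and invoke the $K_1$-frame lower bound; the orthogonality hypothesis plays no role there in either text. On the sum $\alpha K_1+\beta K_2$ you genuinely diverge from the paper, and your route is the sounder one. The paper expands the $\mathcal{A}$-valued square and discards the cross terms on the grounds that $R(K_1)\perp R(K_2)$; as you correctly observe, that hypothesis says $K_1^{\ast}K_2=0$ (equivalently $K_2^{\ast}K_1=0$), whereas the cross term $\langle K_1^{\ast}f,K_2^{\ast}f\rangle_{\mathcal{A}}=\langle K_2K_1^{\ast}f,f\rangle_{\mathcal{A}}$ vanishes for all $f$ only when $K_2K_1^{\ast}=0$, i.e.\ when $R(K_1^{\ast})\perp R(K_2^{\ast})$ --- a different condition: already on $\mathbb{C}^{2}$ the matrices $K_1=\left(\begin{smallmatrix}0&1\\0&0\end{smallmatrix}\right)$ and $K_2=\left(\begin{smallmatrix}0&0\\0&1\end{smallmatrix}\right)$ have orthogonal ranges yet $K_2K_1^{\ast}\neq0$. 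So the cancellation step in the paper is a genuine gap. Your substitute, the estimate $\langle u+v,u+v\rangle_{\mathcal{A}}\le 2\langle u,u\rangle_{\mathcal{A}}+2\langle v,v\rangle_{\mathcal{A}}$ (valid in any Hilbert $C^{\ast}$-module since $\langle u-v,u-v\rangle_{\mathcal{A}}\ge0$), needs no orthogonality at all and, after inserting $\langle K_i^{\ast}f,K_i^{\ast}f\rangle_{\mathcal{A}}\le A_i^{-1}I(f)$, produces exactly the lower bound
\begin{equation*}
\frac{A_1A_2}{2\left(|\alpha|^{2}A_2+|\beta|^{2}A_1\right)}
\end{equation*}
that the paper itself ends up with (had the cancellation been legitimate, the paper's factor $\tfrac12$ would have been unnecessary). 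In short: the paper's method would require strengthening the hypothesis to $R(K_1^{\ast})\perp R(K_2^{\ast})$ to be valid, while yours proves the stated conclusion with the orthogonality hypothesis rendered superfluous, which is both correct and strictly more general. Your closing alternative via the characterization $AKK^{\ast}\le S_{(C,C')}$ and Theorem \ref{do5} is a reasonable second route but is not needed.
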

\begin{proof}
	Let $\{\Lambda_\omega\}_{w\in \Omega }\subset End_{\mathcal{A}}^{\ast}(\mathcal{H},\mathcal{K}_{w})$  be a continuous $(C,C')-$controlled  $K_1$-g-frame for  $\mathcal{H}$ as well a $ K_2$-g-frame for  $\mathcal{H}$ with respect to $\{\mathcal{K}_{w}: w\in\Omega\}$.\\ Then there exist positive constants $A_1,A_2,B_1,B_2$ such that, 
	$$A_1\langle K_1^{\ast}f, K_1^{\ast}f   \rangle _\mathcal{A} \leq \int_{\Omega}\langle \Lambda_\omega   C f,\Lambda_\omega C' f \rangle_\mathcal{A} d\mu(w)\leq B_1  \langle  f,  f \rangle_\mathcal{A}.$$
	$$A_2\langle K_2^{\ast}f, K_2^{\ast}f   \rangle _\mathcal{A} \leq \int_{\Omega}\langle \Lambda_\omega   C f,\Lambda_\omega C' f \rangle_\mathcal{A} d\mu(w)\leq B_2  \langle  f,  f \rangle_\mathcal{A}.$$
	For any $f \in \mathcal{H} $, we have,

     $\langle (\alpha K_1+\beta K_2)^{\ast}f,(\alpha K_1+\beta K_2)^{\ast}f \rangle_\mathcal{A}$ =$\langle 	\overline{\alpha} K_1^{\ast} f+\overline{\beta} K_2^{\ast} f,\overline{\alpha}K_1^{\ast} f + \overline{\beta}K_2^{\ast} f\rangle_\mathcal{A}$
    $$=|\alpha|^2 \langle K_1^{\ast} f,K_1^{\ast}f \rangle_\mathcal{A} + \overline{\alpha} \beta  \langle K_1^{\ast} f,K_2^{\ast}f \rangle + \alpha \overline{\beta} \langle K_2^{\ast} f,K_1^{\ast}f \rangle + |\beta |^2 \langle K_2^{\ast} f,K_1^{\ast}f \rangle$$.

   
	Since $ R(K_1)\perp R(K_2) $,then, 
	$$\langle (\alpha K_1+\beta K_2)^{\ast}f,(\alpha K_1+\beta K_2)^{\ast}f \rangle_\mathcal{A}=|\alpha|^2 \langle K_1^{\ast} f,K_1^{\ast}f \rangle+|\beta |^2 \langle K_2^{\ast} f,K_1^{\ast}f \rangle_\mathcal{A}.$$ 
	Therefore, for each $f \in \mathcal{H}$, we have, 
	$$\frac{A_1A_2}{2(|\alpha|^2A_2+|\beta |^2A_1)}\langle (\alpha K_1+\beta K_2)^{\ast}f,(\alpha K_1+\beta K_2)^{\ast}f\rangle_\mathcal{A}$$
	$$=\frac{A_1A_2|\alpha|^2}{2(|\alpha|^2A_2+|\beta |^2A_1)}\langle K_1^{\ast}f,K_1^{\ast}f\rangle_\mathcal{A}+\frac{A_1A_2|\beta|^2}{2(|\alpha|^2A_2+|\beta |^2A_1)}\langle  K_2^{\ast}f, K_2^{\ast}f\rangle_\mathcal{A}$$
	$$\leq \frac{1}{2} \int_{\Omega}\langle \Lambda_\omega   C f,\Lambda_\omega C' f \rangle_\mathcal{A} d\mu(w)+ \frac{1}{2} \int_{\Omega}\langle \Lambda_\omega   C f,\Lambda_\omega C' f \rangle_\mathcal{A} d\mu(w)\leq \frac{B_1+B_2}{2}\langle  f,  f \rangle_\mathcal{A}.$$
	Therefore $\{\Lambda_\omega\}_{w\in \Omega }$ is a continuous $(C,C')-$controlled  $(\alpha K_1+\beta K_2)$-g-frame for  $\mathcal{H}$  with respect to $\{\mathcal{K}_{w}: w\in\Omega\}$.\\
	Also for every $f \in\mathcal{H} $ we have, 
\begin{align*}
    \langle (K_1K_2)^{\ast}f, (K_1K_2)^{\ast}f \rangle_\mathcal{A} &= \langle K_2^{\ast}K_1^{\ast}f, K_2^{\ast}K_1^{\ast}f \rangle_\mathcal{A}\\
    &\leq \|K_2^{\ast}\|^2\langle K_1^{\ast}f, K_1^{\ast}f \rangle_\mathcal{A}.
\end{align*}
   Since $\{\Lambda_\omega\}_{w\in \Omega }$ is a continuous $(C,C')-$controlled  $ K_1$-g-frame for  $\mathcal{H}$  with respect to $\{\mathcal{K}_{w}: w\in\Omega\}$, we have for every $f \in\mathcal{H} $,
   $$A_1\|K_2^{\ast}\|^{-2}  \langle (K_1K_2)^{\ast}f, (K_1K_2)^{\ast}f \rangle_\mathcal{A}\leq \int_{\Omega}\langle \Lambda_\omega   C f,\Lambda_\omega C' f \rangle_\mathcal{A} d\mu(w) \leq B_1 \langle  f,  f \rangle_\mathcal{A} .$$
   So,  $\{\Lambda_\omega\}_{w\in \Omega }$ is a continuous $(C,C')-$controlled  $ (K_1K_2)$-g-frame for  $\mathcal{H}$  with respect to $\{\mathcal{K}_{w}: w\in\Omega\}$.
\end{proof}
\begin{corollary}
	Let $K\in End_{\mathcal{A}}^{\ast}(\mathcal{H})$. If $\{\Lambda_\omega\}_{w\in \Omega }$ is a continuous $(C,C')-$controlled  $ K$-g-frame for  $\mathcal{H}$  with respect to $\{\mathcal{K}_{w}: w\in\Omega\}$, then for any operator $\ominus$ in the subalgebra generated by K, the family $\{\Lambda_\omega\}_{w\in \Omega }$ is a continuous $(C,C')-$controlled  $ \ominus$-g-frame for  $\mathcal{H}$ with respect to $\{\mathcal{K}_{w}: w\in\Omega\}$.
\end{corollary}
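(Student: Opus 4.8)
The plan is to reduce the statement to the mechanism already exploited in the second half of the preceding theorem, namely that a continuous $(C,C')$-controlled $K$-g-frame is automatically a continuous $(C,C')$-controlled $(KL)$-g-frame for every $L\in End_{\mathcal{A}}^{\ast}(\mathcal{H})$. The only genuinely new ingredient is the observation that each element of the algebra generated by $K$ has the form $KL$.

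First I would fix the reading of ``the subalgebra generated by $K$'' as the non-unital algebra of polynomials in $K$ with zero constant term; this is the correct interpretation, since a $K$-g-frame need not be an ordinary g-frame and therefore cannot in general be an $I$-g-frame. Any such $\ominus$ can then be factored as $\ominus=\sum_{j=1}^{n}c_{j}K^{j}=K\Bigl(\sum_{j=1}^{n}c_{j}K^{j-1}\Bigr)=KL$, where $L:=\sum_{j=1}^{n}c_{j}K^{j-1}\in End_{\mathcal{A}}^{\ast}(\mathcal{H})$.

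Next, using that $L$ is bounded and $\mathcal{A}$-linear, Lemma \ref{2} gives $\langle L^{\ast}x,L^{\ast}x\rangle_{\mathcal{A}}\leq\|L^{\ast}\|^{2}\langle x,x\rangle_{\mathcal{A}}$; setting $x=K^{\ast}f$ and using $\ominus^{\ast}=(KL)^{\ast}=L^{\ast}K^{\ast}$ yields, for all $f\in\mathcal{H}$,
\[
\langle \ominus^{\ast}f,\ominus^{\ast}f\rangle_{\mathcal{A}}\leq\|L^{\ast}\|^{2}\langle K^{\ast}f,K^{\ast}f\rangle_{\mathcal{A}}.
\]
Combining this with the lower bound of the $K$-g-frame, $A\langle K^{\ast}f,K^{\ast}f\rangle_{\mathcal{A}}\leq\int_{\Omega}\langle\Lambda_{\omega}Cf,\Lambda_{\omega}C'f\rangle_{\mathcal{A}}d\mu(w)$, I would obtain
\[
A\|L^{\ast}\|^{-2}\langle \ominus^{\ast}f,\ominus^{\ast}f\rangle_{\mathcal{A}}\leq\int_{\Omega}\langle\Lambda_{\omega}Cf,\Lambda_{\omega}C'f\rangle_{\mathcal{A}}d\mu(w),
\]
while the upper bound $\int_{\Omega}\langle\Lambda_{\omega}Cf,\Lambda_{\omega}C'f\rangle_{\mathcal{A}}d\mu(w)\leq B\langle f,f\rangle_{\mathcal{A}}$ is left untouched. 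Hence $\{\Lambda_{\omega}\}_{w\in\Omega}$ is a continuous $(C,C')$-controlled $\ominus$-g-frame with bounds $A\|L^{\ast}\|^{-2}$ and $B$ (the degenerate case $L=0$, i.e. $\ominus=0$, being trivial).

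The main obstacle is conceptual rather than computational: pinning down the meaning of ``the subalgebra generated by $K$'' so that the factorization $\ominus=KL$ is legitimately available. If one instead insisted on the norm-closed or $\ast$-closed algebra, one would have to either pass to limits, writing $\ominus=\lim_{n}KL_{n}$ and controlling $\|L_{n}^{\ast}\|$ uniformly, or else avoid $K^{\ast}$ altogether, since a $K$-g-frame is in general not a $K^{\ast}$-g-frame. With the polynomial interpretation the estimate above closes the argument at once.
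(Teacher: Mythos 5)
Your proof is correct and follows exactly the route the paper intends: the corollary is stated without a separate proof because it is meant to follow from the product part of the preceding theorem, which is precisely your estimate $\langle (KL)^{\ast}f,(KL)^{\ast}f\rangle_{\mathcal{A}}\leq\|L^{\ast}\|^{2}\langle K^{\ast}f,K^{\ast}f\rangle_{\mathcal{A}}$ after factoring $\ominus=KL$ with $L$ a polynomial in $K$, yielding the bounds $A\|L^{\ast}\|^{-2}$ and $B$. Your added observation that the subalgebra must be read as non-unital (zero constant term), since a $K$-g-frame need not be a g-frame, is a clarification the paper leaves implicit but does not change the argument.
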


\bibliographystyle{amsplain}

\begin{thebibliography}{XX}
	\bibitem{11} S. T. Ali, J. P. Antoine, J. P. Gazeau, \emph{Continuous frames in Hilbert spaces}, Annals of Physics 222 (1993), 1-37.	
	\bibitem{Ara} L. Aramba\v{s}i\'{c}, \emph{On frames for countably generated Hilbert $\mathcal{C}^{\ast}$-modules}, Proc. Amer. Math. Soc. 135 (2007) 469-478.
	\bibitem{12} A. Askari-Hemmat, M. A. Dehghan, M. Radjabalipour, \emph{Generalized frames and their redundancy}, Proc. Amer. Math. Soc. 129 (2001), no. 4, 1143-1147.
    \bibitem{01}P. Balazs, J-P. Antoine and A. Grybos, Wighted and Controlled Frames. Int. J. Walvelets Multi. Inf. Process., 8(1) (2010) 109-132.
    \bibitem{Con} J.B.Conway ,\emph{A Course In Operator Theory},AMS,V.21,2000.
    \bibitem{F11}  Dingli Hua, Y. Huang , Controlled K-g-frames in Hilbert spaces, Springer. International. Publishing. (2016) accepted.
    \bibitem{13} I. Daubechies, A. Grossmann, and Y. Meyer, \emph{Painless nonorthogonal expansions}, J. Math. Phys. 27 (1986), 1271-1283.
    
    \bibitem{Dav} F. R. Davidson, \emph{$\mathcal{C}^{\ast}$-algebra by example},Fields Ins. Monog. 1996.
    
    \bibitem{Duf} R. J. Duffin, A. C. Schaeffer, \emph{A class of nonharmonic fourier series}, Trans. Amer. Math. Soc. 72 (1952), 341-366.
    \bibitem{Do} X.Fang, J. Yu and H. Yao, Solutions to operator equations On Hilbert $C*$-modules, linear Alg. Appl, 431(11) (2009) 2142-2153.
    \bibitem{14} J. P. Gabardo and D. Han, \emph{Frames associated with measurable space}, Adv. Comp. Math. 18 (2003), no. 3, 127-147.
    \bibitem{Gab} D. Gabor, \emph{Theory of communications}, J. Elec. Eng. 93 (1946), 429-457. Controlled K-g-frames in Hilbert spaces, Indian. J. Pure Appl Math, 50(2019), 863-875.
    \bibitem{02} L. Gavruta, Frames for operators, Appl. Comput. Harmon. Anal., 32 (2012) 139-144.
    \bibitem{15} G. Kaiser, \emph{A Friendly Guide to Wavelets}, Birkha”user, Boston, 1994.
    \bibitem{03}M. R. Kouchi and A. Rahimi, On controlled frames in Hilbert$\mathcal{C}^{\ast}$modules, Int. J.Walvelets Multi. Inf. Process. 15(4) (2017): 1750038.
     \bibitem{EC} E. C. Lance, Hilbert $C^{\ast}$-module, A.Toolkit for operator Algebraist, Vol 210 of London mathemtical society lecture note series cambridge , ok , (1995).
    \bibitem{05} M. Nouri, A. Rahimi and Sh. Najafzadeh, Controlled K-frames in Hilbert Spaces, J. of Ramanujan Society of Math. and Math. Sc., 4(2) (2015) 39-50.
   	\bibitem{Pas} W. Paschke, \emph{Inner product modules over $B^{\ast}$-algebras}, Trans. Amer. Math. Soc., (182)(1973), 443-468.
   	\bibitem{R4}  M. Rossafi, A. Touri, H. Labrigui and A. Akhlidj, Continuous $\ast$-K-G-Frame in Hilbert $C^{\ast}$-Modules, Journal of Function Spaces, (2019), Article ID 2426978.
   
	
	

	
	
	
	
	



	
	
	

	

	
	
	
\end{thebibliography}

\vspace{0.1in}

\end{document}